\newtheorem{theorem}{Theorem}[section]
\newtheorem{lemma}[theorem]{Lemma}
\newtheorem{example}[theorem]{Example}
\newtheorem*{example*}{Example}
\newtheoremstyle{myexample}{3pt}{3pt}{\rmfamily}{}{\itshape}{:}{ }{\thmname{#1}\thmnumber{ #2}\thmnote{ (#3)}}
\theoremstyle{myexample}
\newtheoremstyle{myremark}{3pt}{3pt}{\rmfamily}{}{\itshape}{:}{ }{\thmname{#1}}
\theoremstyle{myremark}
\newtheorem*{observation*}{Observation}
\newtheoremstyle{conjecture}{3pt}{3pt}{\itshape}{}{\bfseries}{.}{ }{\thmname{#1}\thmnote{ (#3)}}
\theoremstyle{conjecture}
\newtheorem*{question*}{Question}
\newtheorem{theorem*}{Theorem}
\numberwithin{equation}{section}
\newcounter{algorithm}
\renewcommand{\thealgorithm}{\thesection.\arabic{algorithm}}
\renewcommand{\pmod}[1]{\ \left({\rm mod\ } #1 \right)}
\begin{document}

% A short title is not required, but if needed use:
\title[Non-crystallographic tail-triangle C-groups of rank 4]{Non-crystallographic tail-triangle C-groups of rank 4 and interlacing number 2}

\author[M.L. Loyola]{Mark L. Loyola}
\address{Department of Mathematics, Ateneo de Manila University, Katipunan Avenue, Loyola Heights, Quezon City 1108, Philippines}
\email{mloyola@ateneo.edu}

\author[N.E.S. Leyrita]{Nonie Elvin S. Leyrita}
\address{Natural Sciences and Mathematics Division, Father Saturnino Urios University, San Francisco St, Butuan City 8600, Agusan Del Norte, Philippines}
\email{nsleyrita@urios.edu.ph}

\author[M.L.A.N. De Las Pe\~{n}as]{Ma. Louise Antonette N. De Las Pe\~{n}as}
\address{Department of Mathematics, Ateneo de Manila University, Katipunan Avenue, Loyola Heights, Quezon City 1108, Philippines}
\email{mdelaspenas@ateneo.edu}
% For each additional author, add another set of
% \author, \address, and \email commands
%
%\author{}
%\address{}
%\email{}

\date{(date1), and in revised form (date2).}
\subjclass[2000]{20F55, 51F15, 51F25, 52B11, 52B15}	
\keywords{tail-triangle C-groups, Coxeter groups, modular reduction, semiregular polytopes, non-crystallographic}

% \thanks entries are to acknowledge grants. You may combine
% all acknowledgments into one \thanks entry, or may use
% multiple \thanks entries. They generate footnotes without
% tags, so you must be explicit about which authors are
% thanking whom.
\thanks{M.L. Loyola and M.L.A.N. De Las Pe\~{n}as would like to thank the LS Scholarly Work Faculty Grant for the research funding with Control Number SOSE 02 2019. N.E.S. Leyrita would also like to thank the Commission on Higher Education K-12 Program and Father Saturnino Urios University for the graduate scholarship grant. The authors acknowledge the anonymous reviewer for the helpful comments and valuable suggestions to improve the manuscript and its presentation.}

\begin{abstract}
% Insert abstract here
    This work applies the modular reduction technique to the Coxeter group of rank 4 having a star diagram with labels 5, 3, and $k = 3, 4, 5, \text{ or } 6$. As moduli, we use the primes in the quadratic integer ring $\mathbb{Z}[\tau]$, where $\tau = \frac{1 + \sqrt{5}}{2}$, the golden ratio. We prove that each reduced group is a C-group, regardless of the prime used in the reduction. We also classify each reduced group as a reflection group over a finite field, whenever applicable.
\end{abstract}

\maketitle

% Insert paper text here
\section{Introduction}
    \label{sec:intro}
    \sloppy
    A pair $(\Gamma, S)$ consisting of  a group $\Gamma$ generated by a set $S = \{\mathbf{r}_0', \mathbf{r}_1', \ldots, \mathbf{r}_{n - 1}'\}$ of $n$ involutions is called a \textit{C-group} if it satisfies the \textit{intersection condition}
    \begin{equation} 
        \langle{\mathbf{r}_i' \mid i \in I}\rangle \cap \langle{\mathbf{r}_j' \mid j \in J}\rangle = \langle{\mathbf{r}_k' \mid k \in I \cap J}\rangle
        \label{eq:intCond}
    \end{equation}
    for every pair of subsets $I$, $J$ of the indexing set $N = \{0, 1, \ldots, n - 1\}$. Here, $S$ is called the \textit{distinguished generating set} of $(\Gamma, S)$, and the cardinality $n$ of $S$ is the \textit{rank} of $(\Gamma, S)$. When $S$ is clear from context, we may refer to $\Gamma$ alone as a C-group.  As a consequence of \eqref{eq:intCond}, $S$ is necessarily a minimal generating set of $\Gamma$, and hence the involutions $\mathbf{r}_0', \mathbf{r}_1', \ldots, \mathbf{r}_{n - 1}'$ are all distinct. A Coxeter group is an example of a C-group \cite{Humphreys1990}. As a matter of fact, by the universal property of free groups, if the distinguished generators of a C-group $\Gamma$ satisfy the relations implied by a given Coxeter diagram $\mathscr{D}$, then $\Gamma$ must be a quotient of the Coxeter group determined by $\mathscr{D}$. We remark that any subgroup of $\Gamma$, or an arbitrary C-group for that matter, generated by a subset of its distinguished generating set is also a C-group. 
    
    This work focuses on groups $\Gamma = \langle{\mathbf{r}_0', \mathbf{r}_1', \mathbf{r}_2', \mathbf{r}_3'}\rangle$ of rank 4 which possess the star diagram shown in \textbf{Figure \ref{fig:53kdiagram}}, where the label $k$ of the branch connecting the nodes corresponding to $\mathbf{r}_1'$ and $\mathbf{r}_3'$ is either $3$, $4$, $5$, or $6$. That is, the order or period of the product $\mathbf{r}_i'\mathbf{r}_j'$ is precisely the label of the branch connecting the corresponding nodes. By convention, the label 3 of the branch connecting $\mathbf{r}_1'$ and $\mathbf{r}_2'$ is omitted from the diagram. In addition, the period is assumed to be 2 when there is no connecting branch between two nodes. The group $\Gamma$ is an example of a \textit{tail-triangle group} of rank 4 and \textit{interlacing number} 2, the order of $\mathbf{r}_2'\mathbf{r}_3'$. The construction of such groups using the process of amalgamation have been the focus of the works \cite{MonsonSchulte2012, MonsonSchulte2021}.
    \begin{figure}[H]
        \centering
        \includegraphics[scale = 0.41, keepaspectratio = true]{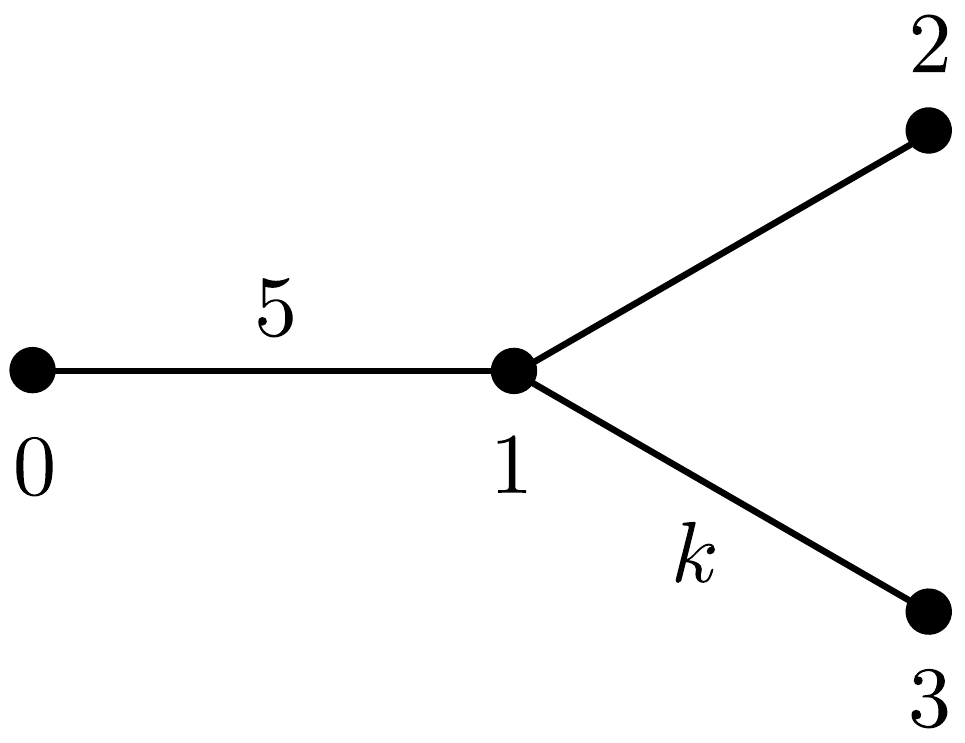} \\[2mm]
        \caption{Coxeter diagram of a star group of rank 4 and type $\{5, 3; k\}$.}
        \label{fig:53kdiagram}
    \end{figure}
    
    \noindent For brevity, we shall refer to $\Gamma$ as a \textit{star group of (rank 4 and) type $\{5, 3; k\}$} in this paper. Evidently, $\Gamma$ is a \textit{smooth quotient} of the star Coxeter group
    \begin{equation} 
        [5, 3; k] := 
        \left\langle{
            \mathbf{r}_0, \mathbf{r}_1, \mathbf{r}_2, \mathbf{r}_3 \: 
            \left| \: 
                \begin{gathered} 
                    \mathbf{r}_0^2 = \mathbf{r}_1^2 = \mathbf{r}_2^2 = \mathbf{r}_3^2 = e, \\ 
                    (\mathbf{r}_0\mathbf{r}_1)^5 = (\mathbf{r}_1\mathbf{r}_2)^3 = (\mathbf{r}_1\mathbf{r}_3)^k = e, 
                    \\ (\mathbf{r}_0\mathbf{r}_2)^2 = (\mathbf{r}_0\mathbf{r}_3)^2 = (\mathbf{r}_2\mathbf{r}_3)^2 = e
                \end{gathered}
            \right.}
        \right\rangle.
        \label{eq:53kPresentation}
    \end{equation}

    \noindent By smooth, we mean that the order of the product $\mathbf{r}_i'\mathbf{r}_j'$ in $\Gamma$ is precisely the order $m_{i, j}$ of $\mathbf{r}_i\mathbf{r}_j$ in $[5, 3; k]$.

    In general, even if $n$ is small, determining if a set of involutions satisfies \eqref{eq:intCond} is a difficult problem, especially when the generated group has no known simple structure. For one, the sheer number of subgroup intersections that needs to be checked poses a computational challenge. 
    The next lemma, which is a special case of \textbf{Lemma 4.12} in \cite{MonsonSchulte2012}, significantly reduces this number. 

    \begin{lemma}
        Let $\Gamma$ be a star group of rank 4 generated by a set $S = \{\mathbf{r}_0', \mathbf{r}_1', \mathbf{r}_2', \mathbf{r}_3'\}$ of four distinct involutions. For any $i, j \in \{0, 1, 2, 3\}$, let
        $\Gamma_i = \langle{\mathbf{r}_k' \mid k \neq i}\rangle$ and $\Gamma_{i, j} = \langle{\mathbf{r}_k' \mid k \neq i, j}\rangle$. Then $\Gamma$ is a C-group if and only if the distinguished subgroups $\Gamma_0$, $\Gamma_2$, and $\Gamma_3$ are C-groups which satisfy the intersections
        \begin{equation} 
            \Gamma_0 \cap \Gamma_2 = \Gamma_{0, 2}, \; \Gamma_0 \cap \Gamma_3 = \Gamma_{0, 3}, \; \Gamma_2 \cap \Gamma_3 = \Gamma_{2, 3}.
            \label{eq:intConds4}
        \end{equation}
        \label{lem:intConds4}
    \end{lemma}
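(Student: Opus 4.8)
The plan is to reduce the full intersection condition \eqref{eq:intCond} to a short checklist and then supply the items it omits from the star geometry. The forward implication is immediate: if $\Gamma$ is a C-group, then \eqref{eq:intCond} applied to the index pairs $(\{1,2,3\},\{0,1,3\})$, $(\{1,2,3\},\{0,1,2\})$, and $(\{0,1,3\},\{0,1,2\})$ yields exactly the three displayed intersections \eqref{eq:intConds4}, while $\Gamma_0,\Gamma_2,\Gamma_3$ are C-groups since each is generated by a subset of the distinguished generators of $\Gamma$ (the remark in Section~\ref{sec:intro}). So the substance is the converse.

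For the converse I would first record the general rank-reduction principle behind Lemma~4.12 of \cite{MonsonSchulte2012}: if every maximal distinguished subgroup $\Gamma_i$ is a C-group and $\Gamma_i\cap\Gamma_j=\Gamma_{i,j}$ for all $i\neq j$, then $\Gamma$ is a C-group. This follows from a short case analysis on an arbitrary pair $I,J\subseteq N$. If $I\cup J\neq N$, both $\Gamma_I$ and $\Gamma_J$ are distinguished subgroups of a single maximal parabolic $\Gamma_c$ with $c\notin I\cup J$, so the intersection condition inside the C-group $\Gamma_c$ gives $\Gamma_I\cap\Gamma_J=\Gamma_{I\cap J}$. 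If $I\cup J=N$ with both proper, choose $a\in N\setminus I$ and $b\in N\setminus J$ (necessarily distinct); then any $g\in\Gamma_I\cap\Gamma_J$ lies in $\Gamma_a\cap\Gamma_b=\Gamma_{a,b}=\Gamma_{N\setminus\{a,b\}}$, and applying the intersection conditions inside the C-groups $\Gamma_a$ and $\Gamma_b$ places $g$ in $\Gamma_{I\setminus\{b\}}\cap\Gamma_{J\setminus\{a\}}$; since $(I\setminus\{b\})\cup(J\setminus\{a\})=N\setminus\{a,b\}$ is proper, the first case applies and forces $g\in\Gamma_{I\cap J}$. Granting this principle, it remains only to produce the data indexed by the central node $1$: that $\Gamma_1$ is a C-group and that $\Gamma_1\cap\Gamma_i=\Gamma_{1,i}$ for $i=0,2,3$.

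Here the star structure does the work. Since $\mathbf{r}_0',\mathbf{r}_2',\mathbf{r}_3'$ pairwise commute, $\Gamma_1=\langle\mathbf{r}_0',\mathbf{r}_2',\mathbf{r}_3'\rangle$ is a quotient of $(\mathbb{Z}/2)^3$, and the only relation compatible with the distinctness of the generators is $\mathbf{r}_0'\mathbf{r}_2'\mathbf{r}_3'=e$. I would exclude this: the relation forces $\mathbf{r}_0'\in\Gamma_0$, whence $\mathbf{r}_0'\in\Gamma_0\cap\Gamma_2=\Gamma_{0,2}=\langle\mathbf{r}_1',\mathbf{r}_3'\rangle$ and $\mathbf{r}_0'\in\Gamma_0\cap\Gamma_3=\Gamma_{0,3}=\langle\mathbf{r}_1',\mathbf{r}_2'\rangle$; intersecting these two rank-$2$ parabolics inside the C-group $\Gamma_0$ collapses to $\langle\mathbf{r}_1'\rangle$, so $\mathbf{r}_0'\in\{e,\mathbf{r}_1'\}$, a contradiction. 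Thus $\Gamma_1\cong(\mathbb{Z}/2)^3$ is a C-group. The same device settles the three intersections: writing a general element of $\Gamma_1$ as $(\mathbf{r}_0')^{\varepsilon_0}(\mathbf{r}_2')^{\varepsilon_2}(\mathbf{r}_3')^{\varepsilon_3}$ and assuming it also lies in $\Gamma_i$, one shows the forbidden exponent must vanish. For instance, in $\Gamma_1\cap\Gamma_3$ a surviving factor $\mathbf{r}_3'$ would put $\mathbf{r}_3'\in\Gamma_3$, hence in $\Gamma_{0,3}\cap\Gamma_{2,3}=\langle\mathbf{r}_1',\mathbf{r}_2'\rangle\cap\langle\mathbf{r}_0',\mathbf{r}_1'\rangle$, which collapses to $\langle\mathbf{r}_1'\rangle$ inside the C-group $\Gamma_3$ and is again impossible; the cases $\Gamma_0\cap\Gamma_1$ and $\Gamma_1\cap\Gamma_2$ are identical after permuting roles. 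With all four maximal parabolics now C-groups and all six pairwise intersections verified, the general principle yields that $\Gamma$ is a C-group.

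I expect the only genuine obstacle to be the general rank-reduction step, whose bookkeeping (tracking which parabolic each element lands in, and checking that the reduced index union is proper) must be handled carefully. The star-specific arguments are then uniformly short, because every bad generator gets trapped between two rank-$2$ subgroups sharing the central node $\mathbf{r}_1'$ and is thereby forced into $\langle\mathbf{r}_1'\rangle$. If one instead invokes Lemma~4.12 of \cite{MonsonSchulte2012} as a black box, the whole proof reduces to the computations of the preceding paragraph.
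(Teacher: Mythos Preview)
Your argument is correct. The paper itself does not prove this lemma at all: it is simply stated as ``a special case of \textbf{Lemma 4.12} in \cite{MonsonSchulte2012}'' and then used as a black box. So there is no proof in the paper to compare against; you have supplied a self-contained one.

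A couple of remarks on the content. Your general rank-reduction principle (all maximal $\Gamma_i$ C-groups plus all six pairwise intersections $\Gamma_i\cap\Gamma_j=\Gamma_{i,j}$ implies $\Gamma$ is a C-group) is exactly the content of the cited Monson--Schulte lemma in the non-string setting, and your two-case proof of it is the standard one; the bookkeeping in the $I\cup J=N$ case is handled cleanly, and the key identity $(I\cap J)\setminus\{a,b\}=I\cap J$ (since $a\notin I$, $b\notin J$) closes the loop. The star-specific part---showing that the hypotheses on $\Gamma_0,\Gamma_2,\Gamma_3$ alone already force $\Gamma_1\cong(\mathbb{Z}/2)^3$ and the three missing intersections $\Gamma_1\cap\Gamma_i=\Gamma_{1,i}$---is the genuine reduction that makes this a \emph{special} case, and your ``trap the bad generator between two rank-$2$ parabolics meeting in $\langle\mathbf{r}_1'\rangle$'' trick is precisely the right way to exploit the fact that node $1$ is the hub of the star. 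In short, your proof both recovers the cited black box and explains why the star diagram lets one drop all conditions indexed by the central node.
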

    \noindent Note that the use of \textbf{Lemma \ref{lem:intConds4}} requires verifying beforehand that $\Gamma_0$, $\Gamma_2$, and $\Gamma_3$ are themselves C-groups. By \textbf{Proposition 2E16} of \cite{McMullenSchulte2002}, this can be accomplished by showing that the following intersections hold:
    \begin{equation} 
        \Gamma_{0, 2} \cap \Gamma_{0, 3} = \Gamma_{0, 2} \cap \Gamma_{2, 3} = \Gamma_{0, 3} \cap \Gamma_{2, 3} = \langle{\mathbf{r}_1'}\rangle.
        \label{eq:intConds3}
    \end{equation}

    C-groups are fundamental, and hence ubiquitous, in the theory of abstract and geometric polytopes. These groups, in fact, characterize the structure or substructure of certain classes of highly symmetric polytopes. For instance, the automorphism group of a regular polytope or any of its subsection is a \textit{string C-group}, that is, a C-group with a linear diagram. Conversely, given a string C-group, one may construct a unique regular polytope having the string C-group as its automorphism group. The interested reader is advised to consult the monograph \cite{McMullenSchulte2002} for further details on abstract polytopes and string C-groups. In \cite{MonsonSchulte2012}, Monson and Schulte discussed a combinatorial generalization of Wythoff construction to produce an \textit{alternating semiregular $(n + 1)$-polytope} from the cosets of a tail-triangle C-group. The automorphism group of the resulting polytope is either the tail-triangle C-group itself or properly contains this group as a normal subgroup of index 2. 

    The main objective of this current work is to construct star C-groups of type $\{5, 3; k\}$ via the method of \textit{modular reduction}. This method was discussed in great detail by Monson and Schulte in \cite{MonsonSchulte2004} and in subsequent works \cite{MonsonSchulte2007, MonsonSchulte2008}. It was mostly applied previously to various families of crystallographic string Coxeter groups. In our case, reducing the star Coxeter group $[5, 3; k]$ results to a degree 4 representation of the group over some finite field $\mathbf{F}_q$, formed by taking the quotient of the quadratic integer ring $\mathbb{Z}[\tau]$ by an ideal generated by a prime $p$ in the ring. Under this representation, the images of the distinguished generators of $[5, 3; k]$ become reflections in some orthogonal space over $\mathbf{F}_q$ for the case when $p$ is not an associate of $2$. Whenever applicable, we classify the reduced group and three of its distinguished subgroups of rank 3 as orthogonal groups. This work further extends the approach and techniques used in \cite{MonsonSchulte2009} to generate C-groups of non-crystallographic types.
\section{Preliminaries}
    \label{sec:prel}

    We begin this section with a brief discussion of the primes in the ring $\mathbf{Z}[\tau]$ followed by a review of the classification of orthogonal groups over finite fields of odd characteristic. These preliminary concepts and associated results will be referred to extensively in the succeeding sections of the paper. 

    \subsection{Primes in $\mathbf{Z}[\tau]$}
    \label{subsec:primesZtau}
    
    Let $\tau = \frac{1 + \sqrt{5}}{2}$ and consider the ring of integers
    \[ \mathbf{Z}[\tau] = \{a + b\tau \mid a, b \in \mathbf{Z}\} \]
    of the quadratic number field $\mathbf{Q}(\sqrt{5})$. For each element $a + b\tau$ of the ring, one can associate a rational integer $N(a + b\tau) = a^2 + ab - b^2$ called its \textit{norm}. A unit in $\mathbf{Z}[\tau]$ is an element of the form $\pm\tau^n$, where $n \in \mathbf{Z}$, and up to multiplication by a unit, a prime $p$ in the ring belongs to one of following three classes \cite{Dodd1983}:
    \begin{enumerate}[{Class} I.]
        \item $p = -1 + 2\tau = \sqrt{5}$ 
        \item $p$ is a rational prime such that $p \equiv \pm 2 \pmod{5}$ 
        \item $p$ is a non-rational prime such that $N(p) \equiv \pm 1 \pmod{5}$ and $|N(p)|$ is a rational prime
    \end{enumerate}
    Since the norm function $N$ remains constant on a set of associates, the primes in Class I or Class II have norms that are equal to either $0$ or $4 \!\! \mod{5}$, respectively. In addition, regardless of class, $q := |N(p)|$ is either a rational prime or the square of a rational prime. 

    A useful computational tool when dealing with the arithmetic properties of $\mathbf{Z}[\tau]$ is the \textit{Legendre symbol for $\mathbf{Z}[\tau]$}: 
    \begin{equation*}
        \left(\frac{a + b\tau}{p}\right)_{\mathbf{Z}[\tau]} =
        \begin{cases}
            \;\;\; 1 &\text{if $a + b\tau$ is a quadratic residue modulo $p$}, \\
            \;\;\; 0 &\text{if $a + b\tau \equiv 0 \pmod{p}$}, \\
            -1 &\text{if $a + b\tau$ is a non-quadratic residue modulo $p$}.
        \end{cases}
        \label{eq:LegendreSymbol}
    \end{equation*}
    This symbol generalizes the ordinary Legendre symbol $\left(\frac{a}{p}\right)_{\mathbf{Z}}$ for rational integers and satisfies the property in the next theorem, in addition to the fundamental properties it shares with $\left(\frac{a}{p}\right)_{\mathbf{Z}}$ (see \cite{Dodd1983} for a list of properties of this generalized Legendre symbol).
    \begin{theorem}
        Let $p = c + d\tau$ be an odd prime in $\mathbf{Z}[\tau]$, that is, a prime which is not an associate of $2$. Then the following holds for any rational integers $a$ and $b$:
        \[
            \left(\dfrac{a + b\tau}{p}\right)_{\mathbf{Z}[\tau]} =
            \begin{cases}
                \left(\dfrac{a}{5}\right)_{\mathbf{Z}} &\text{if $p$ is in Class I and b = 0}, \\[4mm]
                \left(\dfrac{a^2 + ab - b^2}{p}\right)_{\mathbf{Z}} &\text{if $p$ is in Class II}, \\[4mm]
                \left(\dfrac{ad^2-bcd}{|c^2 + cd - d^2|}\right)_{\mathbf{Z}} &\text{if $p$ is in Class III}.
            \end{cases}
        \]

        \label{thm:LegendreSymbol}
    \end{theorem}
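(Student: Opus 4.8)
The plan is to realize the generalized symbol as the quadratic-residue character of a single element in a finite field, and then to compute that character class by class according to how $p$ splits. Since $\mathbf{Z}[\tau]$ is the ring of integers of $\mathbf{Q}(\sqrt{5})$ and has class number one, it is a principal ideal domain, so for an odd prime $p$ the quotient $K_p := \mathbf{Z}[\tau]/(p)$ is a finite field of order $q = |N(p)|$. Writing $\overline{\alpha}$ for the image of $\alpha = a + b\tau$ in $K_p$, the symbol $\left(\frac{a + b\tau}{p}\right)_{\mathbf{Z}[\tau]}$ is by definition $1$, $0$, or $-1$ according as $\overline{\alpha}$ is a nonzero square, zero, or a nonsquare in $K_p$. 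The theorem therefore reduces to identifying $K_p$ and deciding the squareness of $\overline{\alpha}$ in each class, using the fact recorded in Section~\ref{subsec:primesZtau} that $q$ is a rational prime in Classes I and III and equals $p^2$ in Class II.

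I would dispose of Classes I and III first, since both yield a residue field of prime order on which reduction amounts to evaluating $\tau$. In Class I the prime $p = \sqrt{5}$ is ramified and $K_p \cong \mathbf{F}_5$ is the image of $\mathbf{Z}$; when $b = 0$ the element $\alpha = a$ maps to $a \bmod 5$, so $\overline{\alpha}$ is a square in $\mathbf{F}_5$ exactly when $\left(\frac{a}{5}\right)_{\mathbf{Z}} \in \{0, 1\}$, which is the first line. In Class III the prime $p = c + d\tau$ is split and $K_p \cong \mathbf{F}_\ell$ with $\ell = |c^2 + cd - d^2|$; from $\overline{p} = 0$ one gets $\overline{\tau} = -\overline{c}\,\overline{d}^{\,-1}$ once $\overline{d} \neq 0$ is verified (if $\ell \mid d$ then $\ell \mid c^2$, forcing $\ell \mid c$ and hence $\ell^2 \mid N(p)$, a contradiction). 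Substituting,
\[
    \overline{a + b\tau} = \overline{a} - \overline{b}\,\overline{c}\,\overline{d}^{\,-1} = \overline{d}^{\,-2}\,\overline{\bigl(ad^2 - bcd\bigr)},
\]
and since $\overline{d}^{\,-2}$ is a nonzero square, multiplicativity of the ordinary Legendre symbol yields the third line (with the zero cases matching because $\overline{d}^{\,-2} \neq 0$).

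The substantive case is Class II, where $p$ is a rational prime with $p \equiv \pm 2 \pmod{5}$. Then $5$ is a quadratic nonresidue modulo $p$, so $p$ is inert and $K_p \cong \mathbf{F}_{p^2}$. The key tool is the norm criterion: because $(p^2 - 1)/2 = \tfrac{p - 1}{2}(p + 1)$, every $x \in K_p^{\times}$ satisfies $x^{(p^2 - 1)/2} = N_{K_p/\mathbf{F}_p}(x)^{(p - 1)/2}$, so by Euler's criterion $x$ is a square in $K_p$ if and only if its norm $N_{K_p/\mathbf{F}_p}(x) = x^{p + 1}$ is a square in $\mathbf{F}_p$. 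It then remains to match this field norm with the reduction of the arithmetic norm $N(\alpha) = a^2 + ab - b^2$. Since $\overline{\tau}$ generates $K_p$ over $\mathbf{F}_p$ and is a root of $x^2 - x - 1$, the Frobenius map $x \mapsto x^p$ sends $\overline{\tau}$ to the other root $1 - \overline{\tau}$; that is, it agrees with the reduction of the Galois conjugation $\sigma\colon \tau \mapsto \tau^{\sigma} = 1 - \tau$ of $\mathbf{Q}(\sqrt{5})$. Writing $\alpha^{\sigma} = a + b\tau^{\sigma}$, so that $N(\alpha) = \alpha\,\alpha^{\sigma}$, we obtain $N_{K_p/\mathbf{F}_p}(\overline{\alpha}) = \overline{\alpha}\cdot\overline{\alpha}^{\,p} = \overline{\alpha}\cdot\overline{\alpha^{\sigma}} = \overline{N(\alpha)}$, whence $\overline{\alpha}$ is a square in $K_p$ precisely when $a^2 + ab - b^2$ is a square modulo $p$, giving the second line.

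The main obstacle is the Class II argument, the only case where the residue field is a proper extension so that the character cannot be read off by merely evaluating $\tau$: it needs both the norm-square correspondence for $\mathbf{F}_{p^2}/\mathbf{F}_p$ and the identification of Frobenius with Galois conjugation. Two routine loose ends should be closed uniformly: the invertibility of $\overline{d}$ in Class III (handled above) and the agreement of the zero cases, which holds in each class because $\overline{\alpha} = 0$ is equivalent to the vanishing, modulo the relevant prime, of the integer appearing in the right-hand symbol. The splitting behaviour of primes and the reciprocity fact $\left(\frac{5}{p}\right)_{\mathbf{Z}} = \left(\frac{p}{5}\right)_{\mathbf{Z}}$ used to recognize inert primes are standard and are collected in \cite{Dodd1983}.
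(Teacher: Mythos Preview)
Your argument is correct and self-contained: the reduction to identifying the residue field $\mathbf{Z}[\tau]/(p)$ and reading off the quadratic character there is exactly the right framework, and your handling of each class is sound. In Class~III the computation $\overline{\tau} = -\overline{c}\,\overline{d}^{\,-1}$ together with the invertibility check on $\overline{d}$ is clean, and in Class~II the norm criterion combined with the identification of Frobenius with the Galois conjugation $\tau \mapsto 1 - \tau$ is the standard and correct route to $N_{K_p/\mathbf{F}_p}(\overline{\alpha}) = \overline{a^2 + ab - b^2}$.

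There is nothing to compare against, however: the paper does not prove this theorem. It is stated in \S\ref{subsec:primesZtau} as a quoted property of the generalized Legendre symbol, with the reader referred to \cite{Dodd1983} for this and the other basic facts about arithmetic in $\mathbf{Z}[\tau]$. So your proposal supplies a proof where the paper simply cites one; it is a welcome addition rather than a departure from any argument in the text.
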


    \subsection{Orthogonal groups over finite fields of odd characteristic}
    \label{subsec:orthoGroups}

    Let $n \geq 3$ and consider an $n$-dimensional vector space $V(n, q)$ over a finite field $\mathbf{F}_q$ of odd order $q$, endowed with a non-singular symmetric bilinear form $(\cdot)_V$. Thus, relative to any basis $\{v_0, v_1, \ldots, v_{n - 1}\}$ of $V(n, q)$, the Gram matrix $\mathbf{g} = [(v_i \cdot v_j)_V]$ is non-singular and symmetric. With respect to $(\cdot)_V$, the full \textit{orthogonal group}
    \[ O(n, q) = \{f \in GL(n, q) \mid (fu \cdot fv)_V = (u \cdot v)_V \text{ for all } u, v \in V(n, q)\} \]
    is defined to be the subgroup of $GL(n, q)$ consisting of all invertible endomorphisms of $V(n, q)$ that preserve $(\cdot)_V$.

    We recall from \cite{Wilson2009} that $(\cdot)_V$ belongs to one of two equivalence classes of non-singular symmetric bilinear forms on $V(n, q)$ under the action of $GL(n, q)$. When $n$ is even, these two non-equivalent classifications give rise to the non-isomorphic orthogonal groups denoted by $O(n, q, \varepsilon)$, where $\varepsilon = 1$ or $-1$. In particular, when $n$ is even, the parameter $\varepsilon$ specifies whether $(-1)^{\frac{n}{2}}\det{\mathbf{g}}$ is a square or a non-square, respectively, in $\mathbf{F}_q^*$, the multiplicative group of units of $\mathbf{F}_q$. On the other hand, when $n$ is odd, we obtain only the orthogonal group $O(n, q, 0)$. To summarize, $O(n, q)$ can be classified as one of the following types of orthogonal groups over finite fields:
    \begin{equation}
        O(n, q, \varepsilon) =
        \begin{cases}
            O(n, q, 1) &\text{if $n$ is even and $(-1)^{\frac{n}{2}}\det{\mathbf{g}} \in (\mathbf{F}_q^*)^2$}, \\
            O(n, q, 0) &\text{if $n$ is odd}, \\
            O(n, q, -1) &\text{if $n$ is even and $(-1)^{\frac{n}{2}}\det{\mathbf{g}} \notin (\mathbf{F}_q^*)^2$}. \\
        \end{cases}
        \label{eq:Onq}
    \end{equation}

    Given a non-zero anisotropic vector $v$ in $V(n, q)$, that is, a vector $v$ such that $(v \cdot v)_V \neq 0$, we define the involutory map
    \[ r_v(x) = x - 2\dfrac{(x \cdot v)_V}{(v \cdot v)_V}v \]
    in $O(n, q, \varepsilon)$ and call it the \textit{reflection with root $v$}. Note that for any non-zero scalar $\alpha \in \mathbf{F}_q$, we have $r_v = r_{\alpha v}$. The reflection $r_v$ negates the space spanned by $v$ and fixes pointwise the orthogonal complement $v^{\perp}$. Clearly, $\det{r_v} = -1$. Reflections play an integral role in the theory of orthogonal groups since an arbitrary element in $O(n, q, \varepsilon)$ can be written as a product of reflections \cite{Grove2002}. 
    
    An essential map in the theory of orthogonal groups is the \textit{spinor norm} $\theta : O(n, q, \varepsilon) \to \mathbf{F}_q^*/(\mathbf{F}_q^*)^2$ which sends an element $r = r_{v_{k_1}}  r_{v_{k_2}} \cdots r_{v_{k_m}}$ written as a product of reflections in $O(n, q, \varepsilon)$ to the coset of $\mathbf{F}_q^*$ containing the product $(v_{k_1} \cdot v_{k_1})_V(v_{k_2} \cdot v_{k_2})_V \cdots (v_{k_m} \cdot v_{k_m})_V$. This map is a surjective homomorphism and does not depend on the reflections used in the factorization \cite{Grove2002}. Consequently, if $r_v$ is a reflection in the orthogonal group, then it must belong to exactly one of the subgroups 
    \begin{equation} 
        \begin{gathered}
        O_1(n, q, \varepsilon) = \langle{r_v \mid (v \cdot v)_V \in (\mathbf{F}_q^*)^2}\rangle \\
        \text{ and } \\
        O_2(n, q, \varepsilon) = \langle{r_v \mid (v \cdot v)_V \notin (\mathbf{F}_q^*)^2}\rangle, 
        \end{gathered}
        \label{eq:O1nqO2nq}
    \end{equation}
    which we shall encounter again in the next section. We note that when $n$ is odd, these two subgroups are non-isomorphic. When $n$ is even, on the other hand, these two subgroups are isomorphic and, in fact, conjugate in $GL(n, q)$ \cite{MonsonSchulte2004}.

    For $n = 3$ or $4$, Table \ref{tbl:relevantGroups} summarizes the orders of the groups described above. Observe, in particular, that the special subgroup $O_1(n, q, \varepsilon)$ or $O_2(n, q, \varepsilon)$ is an index $2$ subgroup of the full orthogonal group $O(n, q, \varepsilon)$. 

    In this work, we are mainly concerned with a subgroup $G$ of $O(4, q, \varepsilon)$ generated by a set of four distinct reflections $r_0$, $r_1$, $r_2$, $r_3$ with roots $v_0$, $v_1$, $v_2$, $v_3$, respectively, that satisfy the relations implicit in the Coxeter diagram of $[5, 3; k]$ with $k = 3, 4, 5, \text{ or } 6$. That is, the pairwise products of distinct reflections $r_0r_1$, $r_1r_2$, $r_1r_3$ have orders $5$, $3$, $k$, respectively, while the rest have order $2$. We impose the condition that $G$ must act irreducibly on the ambient space $V(4, q)$. Given that the diagram of the generating reflections is connected, this imposition is equivalent to condition that the Cartan matrix $\mathbf{c} = \left[2\dfrac{(v_j \cdot v_i)_V}{(v_i \cdot v_i)_V}\right]$ is non-singular \cite{MonsonSchulte2004}. Thus, we may safely assume that the roots $v_0$, $v_1$, $v_2$, $v_3$ form a basis for $V(4, q)$.

    \begin{table}[H]
        \renewcommand*{\arraystretch}{1.3}
        \centering
        \begin{tabular}{|c|c|}
            \hline
            Group & Order \\
            \hline
            \hline
            $\mathcal{A}_3^p$ & $24$ \\
            \hline
            $\mathcal{B}_3^p$ & $48$ \\
            \hline
            $\mathcal{H}_3^p$ & $120$ \\
            \hline
            $[3, 6]_{(s, 0)}, \; s \geq 2$ & $12s^2$ \\
            \hline
            $O(3, q, 0)$ & $2q(q^2 - 1)$ \\
            \hline
            $O_1(3, q, 0)$ & \multirow{2}{*}{$q(q^2 - 1)$} \\
            \cline{1-1}
            $O_2(3, q, 0)$ & \\
            \hline      
            $O(4, q, 1)$ & $2q^2(q^2 - 1)^2$ \\
            \hline
            $O_1(4, q, 1)$ & \multirow{2}{*}{$q^2(q^2 - 1)^2$} \\
            \cline{1-1}
            $O_2(4, q, 1)$ & \\
            \hline  
            $O(4, q, -1)$ & $2q^2(q^2 + 1)(q^2 - 1)$ \\
            \hline
            $O_1(4, q, -1)$ & \multirow{2}{*}{$q^2(q^2 + 1)(q^2 - 1)$} \\
            \cline{1-1}
            $O_2(4, q, -1)$ & \\
            \hline                               
        \end{tabular}
        \caption{Relevant groups in the classification of the reduction modulo an odd prime $p$ of the star Coxeter group $[5, 3; k]$ and its distinguished subgroups of rank 3.}
        \label{tbl:relevantGroups}
    \end{table}

\section{Modular Reduction of $[5, 3; k]$ with $k = 3, 4, 5, 6$}
\label{sec:modred}

Given a basis $\{a_0, a_1, a_2, a_3\}$ for the real vector space $\mathbf{R}^4$, define the symmetric bilinear form
\[ (a_i \cdot a_j) = -\cos\frac{\pi}{m_{i, j}}, \]
where $m_{i, j}$ denotes the exponent of the product $\mathbf{r}_i\mathbf{r}_j$ in the group presentation \eqref{eq:53kPresentation}. Consider the rescaled basis vectors $v_i = c_ia_i$ for $i = 0, 1, 2, 3$, where
\begin{equation}
    c_0 = 2, \:
    c_1 = 2\tau, \:
    c_2 = 2\tau, \:
    c_3 = 
    \begin{cases} 
        2\tau & \text{if $k = 3$}, \\ 
        2\sqrt{2}\tau & \text{if $k = 4$}, \\ 
        2\tau^2 & \text{if $k = 5$}, \\ 
        2\sqrt{3}\tau & \text{if $k = 6$},
    \end{cases} 
    \label{eq:scale}
\end{equation}
and define the involutory endomorphisms $r_i(x) = x - 2\dfrac{(x \cdot v_i)}{(v_i \cdot v_i)}v_i$ of $\mathbf{R}^4$. The matrices which represent these involutions with respect to the rescaled basis $B = \{v_0, v_1, v_2, v_3\}$ are
\begin{equation}
    \begin{gathered}
        \mathbf{r}_0 =
        \begin{bmatrix}
            -1 & \tau^2 & 0 & 0 \\
            0 & 1 & 0 & 0 \\
            0 & 0 & 1 & 0 \\
            0 & 0 & 0 & 1
        \end{bmatrix}\!, \:      
        \mathbf{r}_2 =
        \begin{bmatrix}
            1 & 0 &  0 & 0 \\
            0 & 1 &  0 & 0 \\
            0 & 1 & -1 & 0 \\
            0 & 0 &  0 & 1
        \end{bmatrix}\!, \:
        \:\: \mathbf{r}_3 =
        \begin{bmatrix}
            1 & 0 & 0 &  0 \\
            0 & 1 & 0 &  0 \\
            0 & 0 & 1 &  0 \\
            0 & 1 & 0 & -1
        \end{bmatrix}\!, \\
        \mathbf{r}_1 =
        \begin{bmatrix}
            1 &  0 & 0 & 0 \\
            1 & -1 & 1 & \rho_k \\
            0 &  0 & 1 & 0 \\
            0 &  0 & 0 & 1
        \end{bmatrix}\!, \:
        \text{where }
        \rho_k = 
        \begin{cases} 
            1 & \text{if $k = 3$}, \\ 
            2 & \text{if $k = 4$}, \\ 
            \tau^2 & \text{if $k = 5$}, \\ 
            3 & \text{if $k = 6$}.
        \end{cases}
    \end{gathered}
    \label{eq:refMats}
\end{equation}
% \[
%     -\frac{\tau^{-2}}{2}(v_1 \cdot v_3)
%     (v_1 \cdot v_3) = 
%     \begin{cases} 
%         -2\tau^2 & \text{if $k = 3$}, \\ 
%         -4\tau^2 & \text{if $k = 4$}, \\ 
%         -2\tau^4 & \text{if $k = 5$}, \\ 
%         -6\tau^2 & \text{if $k = 6$}.
%     \end{cases}
% \]
These matrices, whose entries clearly lie in $\mathbf{Z}[\tau]$, generate a subgroup $G$ of $GL(4, \mathbf{R})$ that is isomorphic to the star Coxeter group $[5, 3; k]$ (see \S{5.3} - \S{5.4} of \cite{Humphreys1990}).

Reducing the entries of the matrices in \eqref{eq:refMats} modulo a prime $p$ in $\mathbf{Z}[\tau]$ yields the group 
\[ G^p = \langle{\mathbf{r}_0, \mathbf{r}_1, \mathbf{r}_2, \mathbf{r}_3}\rangle^p := \langle{\mathbf{r}_i \!\!\!\! \mod{p} \mid i = 0, 1, 2, 3}\rangle  \] 
which acts on the 4-dimensional space spanned by $B$ over the finite field $\mathbf{F}_q = \mathbf{Z}[\tau]/(p)$ of order $q = |N(p)|$. In this context, $G^p$ is called a \textit{modular reduction} of $G$. In the ensuing discussions, we consider the case when $p$ is an \textit{even prime} (an associate of $2$) separately from the case when $p$ is an \textit{odd prime} (not an associate of $2$).

\subsection{$p$ is an even prime}
\label{subsec:evenPrime}

We may assume, without loss of generality, that $p = 2$. In this case, since $q = |N(2)| = 4$, all entries of $\mathbf{r}_i \!\! \mod{p}$ lie in the finite field $\mathbf{F}_4$. Calculations using GAP \cite{GAP2020} reveal that, for $k = 3, 4, 5, 6$, the reduced group $G^2$ is isomorphic to the semidirect product $C_2^4 \rtimes A_5$ of order 960. In addition, $G^2$ is a smooth quotient of $[5, 3; k]$, except when $k = 6$. In this exceptional case, the order of the product $\mathbf{r}_1\mathbf{r}_3 \!\! \mod{2}$ is $3$, and not $6$. 

The isomorphism types of the distinguished subgroups $G_0^2$, $G_2^2$, and $G_3^2$ are listed in \textbf{Table \ref{tbl:GpG023pClassificationEven}}. Notice that the entries in the table for $k = 3$ and $k = 6$ are exactly the same. This is expected since for both values of $k$, we have $\rho_k \equiv 1 \pmod{2}$. Finally, it can be easily verified using GAP that these distinguished subgroups satisfy the intersection condition \eqref{eq:intConds4} for each of the various cases of $k$.

We summarize the above results in the following theorem:
\begin{theorem}
    The group $G^2$, generated by the reductions modulo $2$ of the matrices $\mathbf{r}_0, \mathbf{r}_1, \mathbf{r}_2, \mathbf{r}_3$ in \eqref{eq:refMats}, is a C-group that is isomorphic to the semidirect product $C_2^4 \rtimes A_5$.

    \label{thm:GpTailTriangleCGroupEven}
    \begin{table}[H]
        \renewcommand*{\arraystretch}{1.6}
        \centering
        \begin{tabular}{|c|c|c|c|c|}
            \hline
            $k$ & $G^2$ & $G_0^2$ & $G_2^2$ & $G_3^2$ \\
            \hline
            \hline
            $3$ & \multirow{4}{*}{$C_2^4 \rtimes A_5$} & $S_4$ & $A_5$ & \multirow{4}{*}{$A_5$} \\
            \cline{1-1}
            \cline{3-4}
            $4$ &  & $S_4$ & \multirow{2}{*}{$(C_2^4 \rtimes C_5) \rtimes C_2$} & \\
            \cline{1-1}
            \cline{3-3}
            $5$ & & $A_5$ & & \\
            \cline{1-1}
            \cline{3-4}
            $6$ & & $S_4$ & $A_5$ & \\
            \hline
        \end{tabular}
        \caption{The reduction of $G \simeq [5, 3; k]$ modulo $2$.}
        \label{tbl:GpG023pClassificationEven}
    \end{table}
\end{theorem}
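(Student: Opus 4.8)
The plan is to carry out the verification by direct computation over the field $\mathbf{F}_4 = \mathbf{Z}[\tau]/(2)$, which is legitimate because $G^2$ is finite. First I would reduce the integer matrices of \eqref{eq:refMats} modulo $2$, using that $\tau$ reduces to a primitive element of $\mathbf{F}_4$ satisfying $\tau^2 = \tau + 1$. This collapses the four values of $k$ into three essentially distinct generating sets, according to whether $\rho_k \equiv 1$ (for $k = 3, 6$), $\rho_k \equiv 0$ (for $k = 4$), or $\rho_k \equiv \tau + 1$ (for $k = 5$). Since reduction modulo $2$ is a ring homomorphism $\mathbf{Z}[\tau] \to \mathbf{F}_4$, each reduced generator is automatically an involution, and one checks that the defining relations of $[5, 3; k]$ persist, so that $G^2$ is a smooth quotient, with the sole exception that $\mathbf{r}_1\mathbf{r}_3 \bmod 2$ acquires order $3$ rather than $6$ when $k = 6$.

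Next I would establish the isomorphism $G^2 \cong C_2^4 \rtimes A_5$ by producing the two factors explicitly. The distinguished subgroup $\langle \mathbf{r}_0, \mathbf{r}_1, \mathbf{r}_2 \rangle$ has diagram $[5, 3] = H_3 \cong A_5 \times C_2$, and its reduction is exactly $A_5$, recorded as the entry $G_3^2$ of \textbf{Table \ref{tbl:GpG023pClassificationEven}}; this supplies a copy of $A_5$ inside $G^2$. For the normal factor I would locate an elementary abelian subgroup $N \cong C_2^4$, realized as the normal closure of a suitable commutator, equivalently as the natural $\mathbf{F}_4^2$-module for $A_5 \cong \mathrm{SL}(2, 4)$ viewed additively as $C_2^4$, then verify that $G^2/N \cong A_5$ and that the $A_5$ above is a complement. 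This would identify $G^2$ with the affine special linear group $\mathbf{F}_4^2 \rtimes \mathrm{SL}(2, 4) \cong C_2^4 \rtimes A_5$ of order $16 \cdot 60 = 960$.

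For the C-group property I would invoke \textbf{Lemma \ref{lem:intConds4}} with $\Gamma = G^2$. This requires two verifications. First, that the distinguished rank-$3$ subgroups $G_0^2$, $G_2^2$, $G_3^2$ are themselves C-groups, which by \eqref{eq:intConds3} reduces to checking $G_{0,2}^2 \cap G_{0,3}^2 = G_{0,2}^2 \cap G_{2,3}^2 = G_{0,3}^2 \cap G_{2,3}^2 = \langle \mathbf{r}_1 \bmod 2 \rangle$; here the rank-$2$ subgroups $G_{i,j}^2$ are dihedral, so these intersections amount to comparing orders of small cyclic and dihedral subgroups. Second, that the three intersections of \eqref{eq:intConds4}, namely $G_0^2 \cap G_2^2 = G_{0,2}^2$, $G_0^2 \cap G_3^2 = G_{0,3}^2$, and $G_2^2 \cap G_3^2 = G_{2,3}^2$, all hold. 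Together these would yield that $G^2$ is a C-group for each $k$.

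The main obstacle is the second batch of intersections, those in \eqref{eq:intConds4}. The distinguished rank-$3$ subgroups can be as large as order $160$ (the type $(C_2^4 \rtimes C_5) \rtimes C_2$ occurring for $G_2^2$ when $k = 4, 5$), so their pairwise intersections are not transparent and can only be pinned down once the $A_5$-module structure of $N$ is in hand. Since every group in sight has order at most $960$, all of these checks are finite and are most efficiently discharged in GAP, which is the route the authors take; a purely by-hand argument would hinge on fixing the $A_5$-action on the normal $C_2^4$ and then tracking how the reflection generators sit relative to that decomposition.
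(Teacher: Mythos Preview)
Your proposal is correct and, at its core, matches the paper: the paper's entire argument here is a bare GAP computation, stating only that ``calculations using GAP reveal'' the isomorphism type and that ``it can be easily verified using GAP'' that the intersections in \eqref{eq:intConds4} hold. You spell out that same verification more carefully, invoking \textbf{Lemma~\ref{lem:intConds4}} and \eqref{eq:intConds3}, and you correctly note that $\rho_3 \equiv \rho_6 \pmod 2$ collapses two of the four cases (the paper makes the same observation). The one genuine addition in your outline is the identification of $G^2$ with the affine group $\mathbf{F}_4^2 \rtimes \mathrm{SL}(2,4)$, which the paper does not attempt; this is a pleasant structural explanation of which semidirect product of shape $C_2^4 \rtimes A_5$ actually arises, but it is an extra claim that would itself need to be checked (in GAP or by exhibiting the normal $C_2^4$ explicitly), not something that follows automatically from the rest of your argument.
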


%Polytope for G^2

\subsection{$p$ is an odd prime}
\label{subsec:oddPrime}

Let $p = c + d\tau$ be an odd prime in $\mathbb{Z}[\tau]$. If the form $(\cdot)_p := (\cdot) \! \mod{p}$ is non-singular, which ultimately depends on the prime $p$ used in the reduction, then $\mathbf{r}_i \!\! \mod{p}$ represents a reflection with root $v_i$. Consequently, $G^p$ becomes a reflection group whose Gram and Cartan matrices are the reductions modulo $p$ of
\begin{equation}
    \begin{gathered}
        \mathbf{g} =
        \begin{bmatrix}
            4 & -2\tau^2 & 0 & 0 \\
            -2\tau^2 & 4\tau^2 & -2\tau^2 & -2\tau^2\rho_k \\
            0 & -2\tau^2 & 4\tau^2 & 0 \\
            0 & -2\tau^2\rho_k & 0 & 4\tau^2 \rho_k
        \end{bmatrix}\!, \:
        \mathbf{c} =
        \begin{bmatrix}
            2 & -\tau^2 & 0 & 0 \\
            -1 & 2 & -1 & -\rho_k \\
            0 & -1 & 2 & 0 \\
            0 & -1 & 0 & 2
        \end{bmatrix}
    \end{gathered}
    \label{eq:GramCartan}
\end{equation}
with determinants
\begin{equation}
    \begin{gathered}
        \det{\mathbf{g}} = 2^6\tau^4\rho_k(1 - \tau^2 \rho_k), \;  
        \det{\mathbf{c}} = 2^2\tau^{-2}(1 - \tau^2\rho_k),
    \end{gathered}
    \label{eq:GramCartanDets}
\end{equation}
respectively. Moreover, a series of straightforward calculations shows that $G^p$ is a smooth quotient of $[5, 3; k]$ for $k = 3, 4, 5, 6$.

The following theorem states that $G^p$ is either the full orthogonal group $O(4,q, \varepsilon)$ or the special reflection group $O_1(4, q, \varepsilon)$ described in the previous section. 

\begin{theorem}
    Let $p$ be an odd prime in $\mathbf{Z}[\tau]$, and suppose that the reductions modulo $p$ of the matrices $\mathbf{g}$ and $\mathbf{c}$ in \eqref{eq:GramCartan} are non-singular. Then the group $G^p$, generated by the reductions modulo $p$ of the matrices $\mathbf{r}_0, \mathbf{r}_1, \mathbf{r}_2, \mathbf{r}_3$ in \eqref{eq:refMats}, is an orthogonal group of type either $O(4, q, \varepsilon)$ or $O_1(4, q, \varepsilon)$, where $\varepsilon = \left(\frac{\det{\mathbf{g}}}{p}\right)_{\mathbf{Z}[\tau]}$.
    \label{thm:groupTypeGp}
\end{theorem}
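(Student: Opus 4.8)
The plan is to place $G^p$ inside an orthogonal group, compute its spinor-norm image, and then prove the one non-trivial ``largeness'' fact that forces $G^p$ to be all of $O(4,q,\varepsilon)$ or all of $O_1(4,q,\varepsilon)$. First I would fix the ambient group. Since $(\cdot)_p$ is non-singular by hypothesis, each reduced matrix $\mathbf{r}_i \bmod p$ is an honest reflection $r_{v_i}$ preserving $(\cdot)_p$, so $G^p \le O(4,q,\varepsilon)$. With $n=4$ even, \eqref{eq:Onq} declares $\varepsilon = 1$ or $-1$ according as $(-1)^{2}\det\mathbf{g} = \det\mathbf{g}$ is a square or a non-square in $\mathbf{F}_q^*$; by the definition of the generalized Legendre symbol this is exactly $\varepsilon = \left(\frac{\det\mathbf{g}}{p}\right)_{\mathbf{Z}[\tau]}$, which I would evaluate through Theorem \ref{thm:LegendreSymbol} and \eqref{eq:GramCartanDets}. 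Non-singularity of $\mathbf{c}\bmod p$ together with the connectedness of the diagram gives irreducibility of $G^p$ on $V(4,q)$, as already noted in the excerpt.

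Next comes the spinor norm. Reading the diagonal of $\mathbf{g}$ in \eqref{eq:GramCartan}, the roots $v_0,v_1,v_2$ have norms $4=2^2$ and $4\tau^2=(2\tau)^2$, all squares, whereas $v_3$ has norm $4\tau^2\rho_k=(2\tau)^2\rho_k$. Hence $\theta(r_{v_i})$ is trivial for $i=0,1,2$, while $\theta(r_{v_3})$ is the class of $\rho_k$. Since $\rho_3=1$ and $\rho_5=\tau^2$ are automatically squares, a genuine case split occurs only for $\rho_4=2$ and $\rho_6=3$, governed by $\left(\frac{\rho_k}{p}\right)_{\mathbf{Z}[\tau]}$. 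Because $O_1=\ker\theta$, this shows $G^p\subseteq O_1(4,q,\varepsilon)$ exactly when $\rho_k$ is a quadratic residue mod $p$; and since $r_{v_0}$ has square-norm root, $G^p$ never lies in $O_2(4,q,\varepsilon)$, so the only candidates are $O$ and $O_1$, as claimed. I would then reduce the whole theorem to the single containment $G^p\supseteq\Omega(4,q,\varepsilon)$, where $\Omega=\ker\det\cap\ker\theta$ has index $4$. Granting it, the homomorphism $(\det,\theta)\colon O\to O/\Omega\cong C_2\times C_2$ makes the conclusion pure bookkeeping: the square-norm generators map to $(-1,1)$ and $r_{v_3}$ maps to $(-1,[\rho_k])$, so $G^p/\Omega$ is $O_1/\Omega$ when $\rho_k$ is a residue and all of $O/\Omega$ otherwise.

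The crux is therefore $\Omega(4,q,\varepsilon)\subseteq G^p$, and I would obtain it from the distinguished rank-$3$ subgroups. The stars at $r_1$ are ordinary strings: $\langle r_0,r_1,r_2\rangle$ is of type $\{5,3\}$, $\langle r_1,r_2,r_3\rangle$ of type $\{3,k\}$, and $\langle r_0,r_1,r_3\rangle$ of type $\{5,k\}$, so by the rank-$3$ analysis summarized in Table \ref{tbl:relevantGroups} each reduces, for all but finitely many $p$, to a full $O(3,q,0)$ of order $2q(q^2-1)$ acting irreducibly on a non-degenerate hyperplane of $V(4,q)$. I would then invoke a standard generation lemma---an irreducible subgroup of $O(4,q,\varepsilon)$ containing the orthogonal group of a non-degenerate hyperplane must contain $\Omega(4,q,\varepsilon)$---or, equivalently, appeal to the classification of irreducible finite reflection groups over $\mathbf{F}_q$ underlying \cite{MonsonSchulte2004}, which confines $G^p$ to $\{O,O_1,O_2\}$ outside a bounded list of exceptional finite reflection groups.

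The main obstacle is precisely this largeness step, and dimension $4$ is where it bites: $\Omega(4,q,1)$ is isogenous to $SL(2,q)\times SL(2,q)$ and $\Omega(4,q,-1)$ to $SL(2,q^2)$, so $O(4,q,\varepsilon)$ has an unusually rich supply of reducible, imprimitive, and tensor-decomposable subgroups that an a priori argument must eliminate. I would discharge this by (i) using irreducibility of $G^p$ to discard the decomposable and imprimitive candidates, (ii) using the large rank-$3$ subgroup to exclude the surviving primitive maximal subgroups on grounds of order, and (iii) settling the finitely many residual small moduli---those for which a rank-$3$ subgroup collapses to a finite Coxeter group such as $\mathcal{H}_3^p$ or $[3,6]_{(s,0)}$---directly in GAP, as was done for the even prime in Theorem \ref{thm:GpTailTriangleCGroupEven}. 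Assembling the bookkeeping of the preceding paragraph then yields $G^p=O_1(4,q,\varepsilon)$ when $\left(\frac{\rho_k}{p}\right)_{\mathbf{Z}[\tau]}=1$ and $G^p=O(4,q,\varepsilon)$ otherwise, with $\varepsilon=\left(\frac{\det\mathbf{g}}{p}\right)_{\mathbf{Z}[\tau]}$ in every case.
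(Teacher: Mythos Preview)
Your central ``largeness'' step has a factual error that breaks the primary argument. You assert that the three rank-$3$ subgroups $\langle r_0,r_1,r_2\rangle$, $\langle r_1,r_2,r_3\rangle$, $\langle r_0,r_1,r_3\rangle$ each reduce, for all but finitely many $p$, to a full $O(3,q,0)$. This is false: $\langle r_0,r_1,r_2\rangle$ has string type $[5,3]$, which is the \emph{finite} Coxeter group $\mathcal{H}_3$ of order $120$, so its reduction is $\mathcal{H}_3^p$ of order $120$ for \emph{every} odd $p$. Likewise $\langle r_1,r_2,r_3\rangle$ of type $[3,k]$ is finite for $k=3,4,5$ (namely $\mathcal{A}_3$, $\mathcal{B}_3$, $\mathcal{H}_3$) and affine for $k=6$; none of these become $O(3,q,0)$ for generic $p$. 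Only $\langle r_0,r_1,r_3\rangle = G_2$ of hyperbolic type $[5,k]$ with $k\ge 4$ gives a large orthogonal group (Table~\ref{tbl:GpG023pClassificationOdd}). Thus for $k=3$ you have \emph{no} rank-$3$ subgroup to which your generation lemma applies, and your fallback ``handle the finitely many residual moduli in GAP'' becomes an infinite task.

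The paper avoids this entirely by applying \textbf{Theorem~3.1} of \cite{MonsonSchulte2004} directly---the alternative you allude to but do not carry out. That theorem already confines an irreducible reflection group with non-singular Cartan matrix to the list $\{O(4,q',\varepsilon),\,O_1(4,q',\varepsilon),\,O_2(4,q',\varepsilon),\,\mathcal{A}_4^p,\,\mathcal{B}_4^p,\,\mathcal{D}_4^p,\,\mathcal{F}_4^p,\,\mathcal{H}_4^p\}$ with $q'\mid q$. The paper then eliminates the exceptional Coxeter types: $\mathcal{A}_4^p,\mathcal{B}_4^p,\mathcal{D}_4^p,\mathcal{F}_4^p$ fall because no product of two of their reflections has order $5$, and $\mathcal{H}_4^p$ is ruled out by a single GAP check that $\mathcal{H}_4$ admits no generating quadruple with the star diagram $\{5,3;k\}$. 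Two further points your outline omits are handled explicitly in the paper: the possibility $q'<q$ is excluded because the unit $\tau^2$ appears as a matrix entry of $\mathbf{r}_0$, forcing the group to live over $\mathbf{F}_q$ itself; and $O_2$ is excluded because $(v_0\cdot v_0)_p=4$ is a square. Your spinor-norm and $\varepsilon$ computations are correct and match the paper's, but the hard work lies in the classification appeal and the concrete exclusions, not in an ad~hoc build-up from rank-$3$ pieces.
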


\begin{proof}
    To prove the theorem, we employ \textbf{Theorem 3.1} of \cite{MonsonSchulte2004}, which here implies that $G^p$ must be one of the following irreducible reflection groups:
    \begin{itemize}
        \item the orthogonal group $O(4, q', \varepsilon)$, $O_1(4, q', \varepsilon)$, or $O_2(4, q', \varepsilon)$, where $q' > 1$ is a divisor of $q$; or
        \item the reduction modulo $p$ of the standard representation of the finite irreducible Coxeter group $\mathcal{A}_4$, $\mathcal{B}_4$, $\mathcal{D}_4$, $\mathcal{F}_4$, or $\mathcal{H}_4$.
    \end{itemize}
    
    From \textbf{Table 1} of \cite{MonsonSchulte2004}, we see that the order of the product of two reflections in any of the reduced groups $\mathcal{A}_4^p$, $\mathcal{B}_4^p$, $\mathcal{D}_4^p$, $\mathcal{F}_4^p$ is at most 4. Since the product $\mathbf{r}_0\mathbf{r}_1 \!\! \mod{p}$ in $G^p$ has order 5, we immediately rule out these four crystallographic Coxeter groups. We also rule out $\mathcal{H}_4^p$ since it cannot be generated by four reflections with a star diagram of type $\{5, 3; k\}$, as can be routinely checked using GAP. Consequently, $G^p$ must have an orthogonal group type. 

    The unit entry $\tau^2$ in $\mathbf{r}_0$ implies that $q' = q$. That is, $G^p$ must be an orthogonal group over $\mathbf{F}_q$ itself, and not over a proper subfield. In addition, since the root $v_0$ satisfies $(v_0 \cdot v_0)_p = 4 \!\! \mod{p}$, which is clearly a square in $\mathbf{F}_q^*$ for any $p$, then $G^p$ must be either  $O(4, q, \varepsilon)$ or $O_1(4, q, \varepsilon)$.
\end{proof}

From the Gram matrix $\mathbf{g}$ in \eqref{eq:GramCartan}, we see that $(v_0 \cdot v_0)$, $(v_1 \cdot v_1)$, and $(v_2 \cdot v_2)$ are all squares in $\mathbf{Z}[\tau]$. Thus, determining whether $G^p$ has type $O_1(4, q, \varepsilon)$ or $O(4, q, \varepsilon)$, respectively, rests on whether the reduction modulo $p$ of $(v_3 \cdot v_3) = 4\tau^2\rho_k$
% \begin{equation}
%     v_3^2 := (v_3 \cdot v_3) =
%     \begin{cases} 
%         4\tau^2 & \text{if $k = 3$}, \\ 
%         8\tau^2 & \text{if $k = 4$}, \\ 
%         4\tau^4 & \text{if $k = 5$}, \\ 
%         12\tau^2 & \text{if $k = 6$},
%     \end{cases} 
%     \label{eq:v3Squared}
% \end{equation}
is a square or not in $\mathbf{F}_q^*$. This is, of course, equivalent to determining whether $\delta := \left(\frac{\rho_k}{p}\right)_{\mathbf{Z}[\tau]}$ evaluates to $1$ or $-1$, respectively. In particular, if $k = 3$ or $5$, then $\delta = 1$, regardless of $p$. 

\begin{example} 
    Let $k = 6$. Then $G = \langle{\mathbf{r}_i \mid i = 0, 1, 2, 3}\rangle$ is isomorphic to the star Coxeter group $[5, 3; 6]$. From \eqref{eq:GramCartanDets}, we have $\det{\mathbf{g}} = 2^6\tau^4(-3\tau^4)$ and $\det{\mathbf{c}} = 2^2\tau^{-2}(-\tau^4)$. For an odd prime $p$ in $\mathbf{Z}[\tau]$, we use \textbf{Theorem \ref{thm:groupTypeGp}}, whenever applicable, to classify the type of the reduced group $G^p$ according to the class where $p$ belongs. 
    
    The equalities
    \begin{equation*}
        \begin{gathered} 
            \varepsilon = \left(\frac{\det{\mathbf{g}}}{p}\right)_{\mathbf{Z}[\tau]} = \left(\frac{-3}{p}\right)_{\mathbf{Z}[\tau]}, \;
            \left(\frac{\det{\mathbf{c}}}{p}\right)_{\mathbf{Z}[\tau]} = \left(\frac{-1}{p}\right)_{\mathbf{Z}[\tau]}, \text{ and } 
            \delta = \left(\frac{3}{p}\right)_{\mathbf{Z}[\tau]}, 
        \end{gathered}
        \label{eq:LegendreSymbols536}
    \end{equation*}
    which follow from the properties of the Legendre symbol for $\mathbf{Z}[\tau]$ will be useful. It follows that the only time \textbf{Theorem \ref{thm:groupTypeGp}} cannot be applied is when $p$ is a prime in Class II that is an associate of $3$. This special case will have to be handled separately. 

    \begin{enumerate}[{Class} I.]
        \item Up to associates, $p = -1 + 2\tau = \sqrt{5}$. By \textbf{Theorem \ref{thm:LegendreSymbol}}, we have
            \[ 
                \varepsilon = \left(\dfrac{-3}{5}\right)_{\mathbf{Z}} = -1 
                \text{ and }
                \delta = \left(\frac{3}{5}\right)_{\mathbf{Z}} = -1, 
            \]
            which gives us $G^p \simeq O(4, 5, -1)$.
            
        \item Up to associates, $p$ is a rational prime such that $p \equiv \pm{2} \pmod{5}$. We consider two subcases:
        \begin{enumerate}
            \item If $p$ is an associate of 3, then a computation in GAP shows that $G^p \simeq C_2 \times [C_3^6 \rtimes (C_2 \times A_5)]$ is a group of order 174,960 with distinguished subgroups $G_0^p \simeq (C_3 \times C_3) \rtimes D_6$, $G_2^p \simeq C_3^4 \rtimes D_{10}$, and $G_3^p \simeq  C_2 \times A_5$.
                        
            \item If $p$ is not an associate of 3, on the other hand, then by \textbf{Theorem \ref{thm:LegendreSymbol}}, we have
            \[ 
                \varepsilon = \left(\dfrac{9}{p}\right)_{\mathbf{Z}} = 1 
                \text{ and }
                \delta = \left(\frac{9}{p}\right)_{\mathbf{Z}} = 1, 
            \]        
            which gives us $G^p \simeq O_1(4, p^2, 1)$.
        \end{enumerate}

        \item In this class, $p$ is a non-rational prime with $q \equiv \pm{1} \pmod{5}$. By \textbf{Theorem \ref{thm:LegendreSymbol}}, we have
            \[ 
                \varepsilon = \left(\dfrac{-3}{q}\right)_{\mathbf{Z}} = \left(\dfrac{-1}{q}\right)_{\mathbf{Z}}\left(\dfrac{3}{q}\right)_{\mathbf{Z}}
                \text{ and }
                \delta = \left(\frac{3}{q}\right)_{\mathbf{Z}},
            \]
            which can be easily calculated using the relation
            \[
                \left(\frac{3}{q}\right)_{\mathbf{Z}} = \left(\frac{-1}{q}\right)_{\mathbf{Z}}\left(\frac{q}{3}\right)_{\mathbf{Z}},
            \]
            where 
            \[
                \left(\frac{-1}{q}\right)_{\mathbf{Z}} = 
                \begin{cases}
                    \;\;\: 1 &\text{if $q \equiv 1 \pmod{4}$}, \\
                    -1 &\text{if $q \equiv -1 \pmod{4}$},
                \end{cases}
            \]
            and 
            \[
                \left(\frac{q}{3}\right)_{\mathbf{Z}} = 
                \begin{cases}
                    \;\;\: 1 &\text{if $q \equiv 1 \pmod{3}$}, \\
                    -1 &\text{if $q \equiv -1 \pmod{3}$},
                \end{cases}
            \] 
            implied by the Law of Quadratic Reciprocity for $\mathbf{Z}$.          Combining these results with the assumption that $q \equiv \pm{1} \pmod{5}$ and using the Chinese Remainder Theorem yield
            \[
                G^p \simeq 
                \begin{cases}
                    O(4, q, 1) &\text{if $q \equiv 19, 31 \pmod{60}$}, \\
                    O(4, q, -1) &\text{if $q \equiv 29, 41 \pmod{60}$}, \\                    
                    O_1(4, q, 1) &\text{if $q \equiv 1, 49 \pmod{60}$}, \\
                    O_1(4, q, -1) &\text{if $q \equiv 11, 59 \pmod{60}$}. \\
                \end{cases}
            \] 
    \end{enumerate}
    \label{ex:536GroupType}
\end{example}

Performing a series of computations for $k = 3, 4, 5$ that is similar to what we have accomplished for $k = 6$ above yields the classification of $G^p$ in \textbf{Table \ref{tbl:GpG023pClassificationOdd}}. Observe that the only other case in which $G^p$ is not an orthogonal group is when $k = 5$ and $p$ is a prime in Class I. In this case, one can easily verify that the corresponding Gram matrix is singular. 

It is important to state that the classification of $G^p$ depends not only on the prime $p$ used in the reduction, but also on the choice of scaling factors considered from the very beginning to rescale the basis $\{a_0, a_1, a_2, a_3\}$ to obtain $\{v_1, v_2, v_3, v_4\}$. For instance, in the case $k = 3$, if we multiply each $c_i$ in \eqref{eq:scale} by a factor of $\sqrt{2}$, we get $G^{\sqrt{5}} \simeq O_2(4, 5, -1)$ in place of $O_1(4, 5, -1)$, and $G^p \simeq O\left(4, q, \left(\frac{cd}{q}\right)_{\mathbf{Z}}\right)$ in place of $O_1\left(4, q, \left(\frac{cd}{q}\right)_{\mathbf{Z}}\right)$ whenever $p = c + d\tau$ is a prime in Class III with $q \equiv 11, 19, 21, 29  \pmod{40}$. %For a discussion of the different possible rescaling for a given Coxeter diagram and how it affects the type of the reduced group, one may consult \cite{MonsonSchulte2004,MonsonSchulte2009}.

\begin{table}[H]
    \renewcommand*{\arraystretch}{1.6}
    \scriptsize
    \centering
    \begin{tabular}{|c|c|l|c|c|c|c|}
        \hline
        %k = 3
        $k$ & \multicolumn{2}{c|}{Restrictions on $p$} & $G^p$ & $G_0^p$ & $G_2^p$ & $G_3^p$ \\
        \hline
        \hline
        \multirow{4}{*}{$3$} & I & none & $O_1(4, 5, -1)$ & \multirow{4}{*}{$\mathcal{A}_3^p$} & \multirow{4}{*}{$\mathcal{H}_3^p$} & \multirow{20}{*}{$\mathcal{H}_3^p$} \\
        \cline{2-4}
        & \multirow{2}{*}{II} & $p \equiv 13, 17 \pmod{20}$ & $O_1(4, p^2, 1)$ & & & \\
        \cline{3-4}
        & & $p \equiv 3, 7 \pmod{20}$ & $O_1(4, p^2, -1)$ & & & \\
        \cline{2-4}
        & III & none & $O_1\left(4, q, \left(\frac{cd}{q}\right)_{\mathbf{Z}}\right)$ & & & \\
        \cline{1-6}  
        %k = 4
        \multirow{5}{*}{$4$} & I & none & $O(4, 5, 1)$ & \multirow{5}{*}{$\mathcal{B}_3^p$} & $O(3, 5, 0)$ & \\
        \cline{2-4}
        \cline{6-6}
        & \multirow{2}{*}{II} & $p \equiv 13, 17 \pmod{20}$ & $O_1(4, p^2, 1)$ & & \multirow{2}{*}{$O_1(3, p^2, 0)$} & \\
        \cline{3-4}
        & & $p \equiv 3, 7 \pmod{20}$ & $O_1(4, p^2, -1)$ & & & \\
        \cline{2-4}
        \cline{6-6}
        & \multirow{2}{*}{III} & $q \equiv 11, 19, 21, 29 \pmod{40}$ & $O\left(4, q, \left(\frac{2cd}{q}\right)_{\mathbf{Z}}\right)$ & & $O(3, q, 0)$ & \\
        \cline{3-4}
        \cline{6-6}
        & & $q \equiv 1, 9, 31, 39 \pmod{40}$ & $O_1\left(4, q, \left(\frac{2cd}{q}\right)_{\mathbf{Z}}\right)$ & & $O_1(3, q, 0)$ & \\
        \cline{1-6}
        %k = 5    
        \multirow{4}{*}{$5$} & I & none & $C_5^3 \rtimes (C_2 \times A_5)$ & \multirow{4}{*}{$\mathcal{H}_3^p$} & $O_1(3, 5, 0)$ & \\
        \cline{2-4}
        \cline{6-6}
        & \multirow{2}{*}{II} & $p \equiv 3, 7 \pmod{20}$ & $O_1(4, p^2, 1)$ & & \multirow{2}{*}{$O_1(3, p^2, 0)$} & \\
        \cline{3-4}
        & & $p \equiv 13, 17 \pmod{20}$ & $O_1(4, p^2, -1)$ & & & \\
        \cline{2-4}
        \cline{6-6}
        & III & none & $O_1\left(4, q, \left(\frac{2cd + d^2}{q}\right)_{\mathbf{Z}}\right)$ & & $O_1(3, q, 0)$ & \\
        \cline{1-6}
        %k = 6
        \multirow{7}{*}{$6$} & I & none & $O(4, 5, -1)$ & $[3, 6]_{(5, 0)}$ & $O(3, 5, 0)$ & \\
        \cline{2-6}
        & \multirow{2}{*}{II} & $p = 3$ & $C_2 \times [C_3^6 \rtimes (C_2 \times A_5)]$ & \multirow{2}{*}{$[3, 6]_{(p, 0)}$} & $C_3^4 \rtimes D_{10}$ & \\
        \cline{3-4}
        \cline{6-6}
        & & $p \neq 3$ & $O_1(4, p^2, 1)$ & & $O_1(3, p^2, 0)$ & \\
        \cline{2-6}
        & \multirow{4}{*}{III} & $q \equiv 19, 31 \pmod{60}$ & $O(4, q, 1)$ & \multirow{4}{*}{$[3, 6]_{(q, 0)}$} & \multirow{2}{*}{$O(3, q, 0)$} & \\
        \cline{3-4}
        & & $q \equiv 29, 41 \pmod{60}$ & $O(4, q, -1)$ & & & \\
        \cline{3-4}
        \cline{6-6}
        & & $q \equiv 1, 49 \pmod{60}$ & $O_1(4, q, 1)$ & & \multirow{2}{*}{$O_1(3, q, 0)$} & \\
        \cline{3-4}
        & & $q \equiv 11, 59 \pmod{60}$ & $O_1(4, q, -1)$ & & & \\                
        \hline    
    \end{tabular}
    \caption{The reduction of $G \simeq [5, 3; k]$ modulo $p = c + d\tau$, an odd prime in $\mathbf{Z}[\tau]$.}
    \label{tbl:GpG023pClassificationOdd}
\end{table}

The next theorem classifies the types of the distinguished subgroups $G_0^p$, $G_2^p$, and $G_3^p$ of rank 3. 

\begin{theorem}
    Let $p$ be an odd prime in $\mathbf{Z}[\tau]$, and  let $G_i^p$, where $i = 0, 2, \text{or } 3$, be the distinguished subgroup of $G^p$ that is generated by the reductions modulo $p$ of the matrices $\mathbf{r}_j$ with $j \neq i$ in \eqref{eq:refMats}. Then the following statements hold:
    \begin{enumerate}
        \item $G_0^p$ is the reduction modulo $p$ of the standard representation of the finite irreducible Coxeter group $\mathcal{A}_3$, $\mathcal{B}_3$, or $\mathcal{H}_3$, if $k = 3, 4, \text{ or } 5$, respectively; or is the automorphism group $[3, 6]_{(s, 0)}$ of the regular torus $\{3, 6\}_{(s, 0)}$, if $k = 6$, where $s = 5, p, q$ according as $p$ is in Class I, II, III, respectively.
        \item $G_2^p$ is an orthogonal group of type $O(3, q, 0)$ or $O_1(3, q, 0)$, except when $k = 6$ and $p$ is an associate of $3$.
        \item $G_3^p$ is the reduction modulo $p$ of the standard representation of the finite irreducible Coxeter group $\mathcal{H}_3$.
    \end{enumerate}
    \label{thm:groupTypeGp023}
\end{theorem}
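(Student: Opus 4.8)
The plan is to handle $G_0^p$, $G_2^p$, and $G_3^p$ uniformly by viewing each as the modular reduction of a rank-3 string Coxeter group and then rerunning the argument used for $G^p$ in the proof of Theorem \ref{thm:groupTypeGp}. Since $G_i^p$ is generated by three of the four reflections in \eqref{eq:refMats}, its Gram and Cartan matrices are the principal submatrices of $\mathbf{g}$ and $\mathbf{c}$ in \eqref{eq:GramCartan} obtained by deleting row and column $i$. Reading the branch labels off each subdiagram identifies the Coxeter types as $G_0=[3,k]$, $G_2=[5,k]$, and $G_3=[5,3]=\mathcal{H}_3$; in particular $G_3$ does not involve $\mathbf{r}_3$ and so is independent of $k$, which already yields statement (3) once faithfulness is confirmed. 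First I would record these three submatrices together with their determinants and check that each reduced form is non-singular, hence that the action on the $3$-space is irreducible, save for the single genuine degeneracy $\det\mathbf{g}_0=0$ when $k=6$, which signals the Euclidean nature of $[3,6]$ and must be treated apart.

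With irreducibility in hand, Theorem 3.1 of \cite{MonsonSchulte2004}, applied now in rank $3$, forces each $G_i^p$ to be an orthogonal group $O(3,q',0)$, $O_1(3,q',0)$, or $O_2(3,q',0)$ over a subfield, or else the reduction of a finite or Euclidean irreducible rank-3 Coxeter group ($\mathcal{A}_3$, $\mathcal{B}_3$, $\mathcal{H}_3$, or affine $[3,6]$). Because $G^p$ is a smooth quotient of $[5,3;k]$, the orders of the products of pairs of generating reflections equal the branch labels, and these labels select the candidate: the label $5$ present in $G_2$ and $G_3$ eliminates $\mathcal{A}_3^p$, $\mathcal{B}_3^p$, and every affine type, while the standard trichotomy for $[m,k]$ (spherical, Euclidean, or hyperbolic according as $\frac1m+\frac1k$ is greater than, equal to, or less than $\frac12$) separates the finite underlying groups from the infinite ones. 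Thus $[3,k]$ is finite for $k=3,4,5$ and Euclidean for $k=6$; $[5,k]$ is finite only for $k=3$ and hyperbolic for $k=4,5,6$; and $[5,3]$ is always finite. Whenever the underlying group is finite, its image has order bounded independently of $q$, so for $q$ larger than that bound it cannot equal a growing orthogonal group and must be the finite reduction $\mathcal{A}_3^p$, $\mathcal{B}_3^p$, or $\mathcal{H}_3^p$; the finitely many small $q$ are then verified directly. This disposes of (3), the $k=3,4,5$ rows of (1), and the finite $k=3$ entry of $G_2^p=\mathcal{H}_3^p$.

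For the infinite underlying groups the image grows with $q$, so only the orthogonal conclusion (hyperbolic $G_2$) or the toroidal conclusion (Euclidean $G_0$) survives. Exactly as in Theorem \ref{thm:groupTypeGp}, the unit entry $\tau^2$ carried by $\mathbf{r}_0\in G_2$ prevents descent to a proper subfield, giving $q'=q$; and because $n=3$ is odd we always have $\varepsilon=0$, leaving only the dichotomy $O(3,q,0)$ versus its spinor kernel $O_1(3,q,0)$ for $G_2^p$. This is settled by the spinor norm precisely as before: the roots of $G_2$ have norms $(v_0\cdot v_0)=4$, $(v_1\cdot v_1)=4\tau^2$, and $(v_3\cdot v_3)=4\tau^2\rho_k$, the first two being squares, so $G_2^p$ lands in $O_1(3,q,0)$ exactly when $\delta=\left(\frac{\rho_k}{p}\right)_{\mathbf{Z}[\tau]}=1$ and equals $O(3,q,0)$ exactly when $\delta=-1$. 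Evaluating this one Legendre symbol by Theorem \ref{thm:LegendreSymbol} in each prime class reproduces the orthogonal entries of the $G_2^p$ column of Table \ref{tbl:GpG023pClassificationOdd} and establishes (2) in the hyperbolic cases $k=4,5,6$ (the finite case $k=3$ having been subsumed above). It then remains to identify $G_0^p$ when $k=6$, where $\mathbf{g}_0$ is singular and the reduction of $[3,6]$ is the automorphism group of the regular torus $\{3,6\}_{(s,0)}$ of order $12s^2$.

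Pinning down $s$ is the main obstacle, since it is the one datum the classification theorem does not provide. I would use the semidirect decomposition of the reduced Euclidean group $[3,6]$ into its point group and translation lattice: the point group is the finite dihedral group of order $12$, while the translation part is the triangular $\tilde G_2$ lattice, which for $k=6$ is carried by integral matrices and therefore reduces to $(\mathbf{Z}/p_0\mathbf{Z})^2$, with $p_0$ the characteristic of $\mathbf{F}_q$. The reduced group then has order $12\,p_0^2$, and matching against $12s^2$ in Table \ref{tbl:relevantGroups} forces $s=p_0$, namely $s=5$, $p$, or $q$ according as $p$ lies in Class I, II, or III, which completes (1). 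The sole remaining case is the non-smooth exception $k=6$ with $p$ an associate of $3$: there $\mathbf{r}_1\mathbf{r}_3$ collapses to order $3$, the hypotheses of the classification fail, and the structures $G_0^p\simeq[3,6]_{(3,0)}$ and $G_2^p\simeq C_3^4\rtimes D_{10}$ are those already computed in Example \ref{ex:536GroupType} and recorded in Table \ref{tbl:GpG023pClassificationOdd}, precisely the exception flagged in statement (2).
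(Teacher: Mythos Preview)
Your overall approach matches the paper's: both invoke Theorem~3.1 of \cite{MonsonSchulte2004} in rank $3$ on the appropriate Gram/Cartan submatrices, use smoothness to pick out the finite Coxeter reductions for $G_0^p$ ($k\le 5$) and $G_3^p$, and rerun the argument of Theorem~\ref{thm:groupTypeGp} for $G_2^p$. The one place your route genuinely differs is part (1) for $k=6$. The paper first cites Senechal's classification of finite smooth quotients of $[3,6]$ to reduce the possibilities to $[3,6]_{(s,0)}$ or $[3,6]_{(s,s)}$, and then computes the explicit matrix powers $\mathbf{x}^s$ and $(\mathbf{x}^{-1}\mathbf{y})^s$ (with $\mathbf{x}=\mathbf{r}_1\mathbf{r}_3\mathbf{r}_1\mathbf{r}_3\mathbf{r}_1\mathbf{r}_2$ and $\mathbf{y}=\mathbf{r}_3\mathbf{r}_1\mathbf{r}_3\mathbf{r}_2\mathbf{r}_1\mathbf{r}_2$) to show that both translation generators have the same order $s$, which simultaneously pins down the $(s,0)$ type and the value of $s$. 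Your structural claim that the integral translation lattice reduces to $(\mathbf{Z}/p_0\mathbf{Z})^2$ is morally the same computation, but as stated it is incomplete: you must still check that the reduction is faithful on the dihedral point group, that no extra relations arise between translations and rotations, and that each translation generator has order exactly $p_0$ rather than a proper divisor. The paper's explicit matrices settle all of this at once.

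There is also a factual slip in your final paragraph. For $k=6$ and $p$ an associate of $3$, the quotient $G^p$ is still smooth and $\mathbf{r}_1\mathbf{r}_3\bmod p$ has order $6$; the paper states at the start of \S\ref{subsec:oddPrime} that $G^p$ is a smooth quotient of $[5,3;k]$ for \emph{every} odd prime. The collapse to order $3$ you are recalling occurs only for the even prime $p=2$. The actual reason this case is exceptional for $G_2^p$ is that the Gram submatrix obtained by deleting the third row and column has determinant $-48\tau^5$, which vanishes modulo $3$, so the restricted form is degenerate and Theorem~3.1 of \cite{MonsonSchulte2004} no longer applies. Your deferral to the explicit GAP computation in Example~\ref{ex:536GroupType} remains the correct disposal of this case; only the diagnosis needs correcting.
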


\begin{proof}
    Except for the case $i = 0$ and $k = 6$, or the case $i = 2$, $k = 6$, and $p$ is an associate of $3$, the reductions modulo $p$ of the submatrices obtained by deleting the $(i + 1)$st row and the $(i + 1)$st column of $\mathbf{g}$ and $\mathbf{c}$ in \eqref{eq:GramCartan} are non-singular. Consequently, except for these aforementioned cases, \textbf{Theorem 3.1} of \cite{MonsonSchulte2004}, where $n = 3$, may be applied. The smoothness of $G^p$ as a quotient of the group $[5, 3; k]$ is enough to conclude that $G_0^p$ or $G_3^p$ corresponds to one of $\mathcal{A}_3^p$, $\mathcal{B}_3^p$, and $\mathcal{H}_3^p$. On the other hand, using a series of arguments similar to what we have used in the proof of \textbf{Theorem \ref{thm:groupTypeGp}} above, we obtain the desired conclusion for $G_2^p$. 

    Thus, only the distinguished subgroup $G_0^p$ when $k = 6$ remains to be classified. First, note that $G_0^p$ is a finite smooth quotient of $[3, 6]$, the automorphism group of the tessellation $\{3, 6\}$ of the Euclidean plane by regular triangles. It follows from the list of finite quotients of the Coxeter group $[3, 6]$ (see \textbf{Table I} of \cite{Senechal1985}) that $G_0^p$ is isomorphic to either $[3, 6]_{(s, 0)} \simeq (C_s \times C_s) \rtimes D_6$, the automorphism group of the regular torus $\{3, 6\}_{(s, 0)}$, or to $[3, 6]_{(s, s)} \simeq (C_{3s} \times C_s) \rtimes D_6$, the automorphism group of the regular torus $\{3, 6\}_{(s, s)}$ (see \cite{McMullenSchulte2002} for further details on these regular toroidal polyhedra). In either group decomposition, $\mathbf{x} \!\! \mod{p}$ and $\mathbf{x}^{-1}\mathbf{y} \!\! \mod{p}$ are the generators of the cyclic factors, where $\mathbf{x} = \mathbf{r}_1\mathbf{r}_3\mathbf{r}_1\mathbf{r}_3\mathbf{r}_1\mathbf{r}_2$, $\mathbf{y} = \mathbf{r}_3\mathbf{r}_1\mathbf{r}_3\mathbf{r}_2\mathbf{r}_1\mathbf{r}_2$, while $\mathbf{r}_1 \!\! \mod{p}$ and $\mathbf{r}_3 \!\! \mod{p}$ are the generators of the dihedral factor. To show that $G_0^p$ corresponds to the first decomposition, we verify that the generators of the cyclic factors have the same order. 
    
    A quick calculation shows that for any positive integer $s$, we have
    \[
        \mathbf{x}^s =
        \begin{bmatrix}
            1 & 0 & 0 & 0 \\
            4s^2 & 1 + 2s & -4s & 0 \\
            2s^2 - 2s & s & 1 - 2s & 0 \\
            2s^2 & s & -2s & 1
        \end{bmatrix}.
    \]
    It follows that if $s$ is the order of $\mathbf{x} \!\! \mod{p}$, then $s$ must be the smallest non-negative odd rational prime that is divisible by $p$. That is, $s = 5, p, q$ according as $p$ is in Class I, II, III, respectively. The same conclusion applies to $\mathbf{x}^{-1}\mathbf{y} \!\! \mod{p}$ whose $s$th power is given by
    \[ 
        (\mathbf{x}^{-1}\mathbf{y})^s =
        \begin{bmatrix}
            1 & 0 & 0 & 0 \\
            4s^2 & 1 - 4s & 2s & 6s \\
            2s^2 + s & -2s & 1 + s & 3s \\
            2s^2 + s & -2s & s & 1 + 3s            
        \end{bmatrix}.  
    \]
    Thus, $\mathbf{x} \!\! \mod{p}$ and $\mathbf{x}^{-1}\mathbf{y} \!\! \mod{p}$ have the same order, completing the proof.
\end{proof}
%reduction modulo $p$ of the standard representation of the finite irreducible Coxeter groups $\mathcal{A}_3$, $\mathcal{B}_3$, $\mathcal{H}_3$ in \textbf{Theorem \ref{thm:groupTypeGp023}} by $\mathcal{A}_3^p$, $\mathcal{B}_3^p$, $\mathcal{H}_3^p$, respectively, and
From \cite{Hartley2006, McMullenSchulte2002}, we see that $\mathcal{A}_3^p$, $\mathcal{B}_3^p$, $\mathcal{H}_3^p$, and $[3, 6]_{(s, 0)}$ are all string C-groups. The first three correspond to the tetrahedron $\{3, 3\}$, the octahedron $\{3, 4\}$ or its dual cube $\{4, 3\}$, and the icosahedron $\{3, 5\}$ or its dual dodecahedron $\{5, 3\}$, respectively. The orders of these groups are listed in \textbf{Table \ref{tbl:relevantGroups}}.

Applying \textbf{Theorem \ref{thm:groupTypeGp023}} and employing an approach similar to what we demonstrated in \textbf{Example \ref{ex:536GroupType}} lets us classify the distinguished subgroup $G_2^p$. We summarize the results of the calculations together with the classification of $G_0^p$ and $G_3^p$ in \textbf{Table \ref{tbl:GpG023pClassificationOdd}}. Observe that the only time in which $G_2^p$ is not a reflection group for a non-singular space is when $k = 6$ and $p$ is an associate of $3$. 

% As a final remark, it is important to mention that while the matrix $\mathbf{r}_i$ is not contained in $G_i$, there is a possibility for its reduced counterpart $\mathbf{r}_i \!\! \mod{p}$ to be in $G_i^p$, in general. We emphasize, however, that in the case of $G \simeq [5, 3; k]$, each distinguished subgroup $G_i^p$ surely does not contain $\mathbf{r}_i \!\! \mod{p}$ since, from the \textbf{Table \ref{tbl:GpG023pClassificationOdd}}, $G_i^p$ must be a proper subgroup of $G^p$. 
\section{Star C-groups of type $\{5, 3; k\}$ with $k = 3, 4, 5, 6$}
    \label{sec:C-groups}
    
    We have already shown earlier that each of the reduced groups $G^p$ is a star C-group when $p$ is an even prime. We now prove that the same statement holds when $p$ is an odd prime. The key to proving this result, which we state more formally in a forthcoming theorem, relies on the following lemma:

    \begin{lemma}
        Let $p$ be an odd prime in $\mathbf{Z}[\tau]$ such that the group $G^p$, generated by the reductions modulo $p$ of the matrices $\mathbf{r}_0, \mathbf{r}_1, \mathbf{r}_2, \mathbf{r}_3$ in \eqref{eq:refMats}, is an orthogonal group of type $O(4, q, \varepsilon)$ or $O_1(4, q, \varepsilon)$. Then $G^p = \langle{\mathbf{r}_0, \mathbf{r}_1, \mathbf{z}, \mathbf{r}_3}\rangle^p$, where $\mathbf{z} = \mathbf{r}_3^{\mathbf{r}_2\mathbf{r}_1}, \mathbf{r}_0^{(\mathbf{r}_3\mathbf{r}_1\mathbf{r}_2)^5}, \mathbf{r}_3^{\mathbf{x}^i\mathbf{y}}$ according as $k = 4, 5, 6$, respectively. Here, $\mathbf{a}^{\mathbf{b}}$ denotes the conjugate $\mathbf{b}\mathbf{a}\mathbf{b}^{-1}$.
        \label{lem:53kPolys}
    \end{lemma}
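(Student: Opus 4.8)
The plan is to set $H := \langle{\mathbf{r}_0, \mathbf{r}_1, \mathbf{z}, \mathbf{r}_3}\rangle^p$ and prove the two inclusions $H \le G^p$ and $G^p \le H$ separately. The first is immediate: in each of the cases $k = 4, 5, 6$ the element $\mathbf{z}$ is, by construction, a conjugate $\mathbf{g}\mathbf{r}\mathbf{g}^{-1}$ of a distinguished generator $\mathbf{r} \in \{\mathbf{r}_0, \mathbf{r}_3\}$ by a word $\mathbf{g}$ in the $\mathbf{r}_i$, so $\mathbf{z} \in G^p$ and hence $H \le G^p$. Moreover, since the conjugating element $\mathbf{g}$ lies in $G^p \le O(4, q, \varepsilon)$ and therefore preserves the reduced form, $\mathbf{z}$ is again a reflection, with root $\mathbf{w} = \mathbf{g}v$ (where $v \in \{v_0, v_3\}$ is the root of $\mathbf{r}$) satisfying $(\mathbf{w} \cdot \mathbf{w}) = (v \cdot v)$. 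Thus $H$ is itself a reflection group on $V(4, q)$, generated by the four reflections with roots $v_0, v_1, \mathbf{w}, v_3$.

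For the reverse inclusion I would avoid writing $\mathbf{r}_2$ directly as a word in the new generators and instead identify $H$ by type. First I would show that $H$ acts irreducibly on $V(4, q)$. Since $H$ already contains $\langle{\mathbf{r}_0, \mathbf{r}_1, \mathbf{r}_3}\rangle^p = G_2^p$, whose roots $v_0, v_1, v_3$ span a non-singular $3$-dimensional subspace carrying the connected diagram of $G_2^p$, it is enough to check that $\mathbf{w}$ is linearly independent from $v_0, v_1, v_3$ and non-orthogonal to at least one of them; equivalently, that the reduction modulo $p$ of the $4 \times 4$ Cartan matrix of the roots $v_0, v_1, \mathbf{w}, v_3$ is non-singular. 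This is a finite computation: using the matrices in \eqref{eq:refMats} (and, for $k = 6$, the explicit power formula for $\mathbf{x}^s$ already obtained in the proof of \textbf{Theorem \ref{thm:groupTypeGp023}}), one computes $\mathbf{w}$ as a vector over $\mathbf{Z}[\tau]$ and evaluates the single resulting determinant.

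Granted irreducibility, I would then invoke \textbf{Theorem 3.1} of \cite{MonsonSchulte2004} exactly as in the proof of \textbf{Theorem \ref{thm:groupTypeGp}}: the relation $(\mathbf{r}_0\mathbf{r}_1)^5 = e$ rules out the crystallographic reduced Coxeter groups and, as checked there, $\mathcal{H}_4^p$; the unit entry $\tau^2$ of $\mathbf{r}_0$ forces the field to be $\mathbf{F}_q$ and not a proper subfield; and since $\mathbf{r}_0 \in H$ is a reflection of square root-norm, $H$ cannot be of type $O_2(4, q, \varepsilon)$. Hence $H$ is one of $O(4, q, \varepsilon)$ or $O_1(4, q, \varepsilon)$ with respect to the same form as $G^p$, so with the same $\varepsilon$. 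To decide which, I would compare spinor norms: the generators of $H$ have root-norms $(v_0 \cdot v_0)$, $(v_1 \cdot v_1)$, $(\mathbf{w} \cdot \mathbf{w}) = (v \cdot v)$, $(v_3 \cdot v_3)$, and since $(v \cdot v) \in \{(v_0 \cdot v_0), (v_3 \cdot v_3)\}$ these generate the same subgroup of $\mathbf{F}_q^*/(\mathbf{F}_q^*)^2$ under $\theta$ as the root-norms of $\mathbf{r}_0, \mathbf{r}_1, \mathbf{r}_2, \mathbf{r}_3$ do for $G^p$ (recall $(v_2 \cdot v_2) = 4\tau^2$ is a square). Thus $\theta(H) = \theta(G^p)$, so $H$ and $G^p$ are orthogonal groups of exactly the same type with respect to the same form; combined with $H \le G^p$ and the orders listed in \textbf{Table \ref{tbl:relevantGroups}}, this forces $H = G^p$.

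The main obstacle is the irreducibility step, and making it uniform across the three classes of odd primes. The computation of $\mathbf{w}$ is routine for $k = 4$ but more delicate for $k = 5, 6$, where $\mathbf{z}$ is built from a fifth power of a product of generators; one must verify that the single $\mathbf{Z}[\tau]$-determinant controlling the span of $\{v_0, v_1, \mathbf{w}, v_3\}$ reduces to a nonzero element of $\mathbf{F}_q$ for every admissible $p$, and separate off any prime at which it degenerates (mirroring the singular Gram matrix excluded for $k = 5$ in Class I). Everything downstream---the application of \textbf{Theorem 3.1} and the spinor-norm bookkeeping---then runs in parallel with the already-established \textbf{Theorem \ref{thm:groupTypeGp}}.
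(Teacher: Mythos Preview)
Your approach is essentially the paper's: compute the root $\mathbf{w}$ of $\mathbf{z}$, verify that the new root system $\{v_0, v_1, \mathbf{w}, v_3\}$ has connected diagram and non-singular Cartan matrix over $\mathbf{F}_q$, then invoke \textbf{Theorem 3.1} of \cite{MonsonSchulte2004} exactly as in \textbf{Theorem \ref{thm:groupTypeGp}} to conclude that $H = \langle \mathbf{r}_0,\mathbf{r}_1,\mathbf{z},\mathbf{r}_3\rangle^p$ is $O(4,q,\varepsilon)$ or $O_1(4,q,\varepsilon)$ and hence coincides with $G^p$. The only real difference is in the last step: the paper pins down the type of $H$ by redoing case-by-case Legendre-symbol computations in the style of \textbf{Example \ref{ex:536GroupType}}, whereas your spinor-norm observation that $(\mathbf{w}\cdot\mathbf{w}) \in \{(v_0\cdot v_0),(v_3\cdot v_3)\}$ while $(v_2\cdot v_2)=4\tau^2$ is already a square gives the same conclusion without the case split.
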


    \begin{proof}
        To prove the lemma, it suffices to show that replacing $\mathbf{r}_2 \!\! \mod{p}$ by $\mathbf{z} \!\! \mod{p}$ still results in an orthogonal group of the same type as $G^p$. 

        \begin{figure}[H]
            \begin{subfigure}{\textwidth}
                \centering
                \includegraphics[scale = 0.40, keepaspectratio = true]{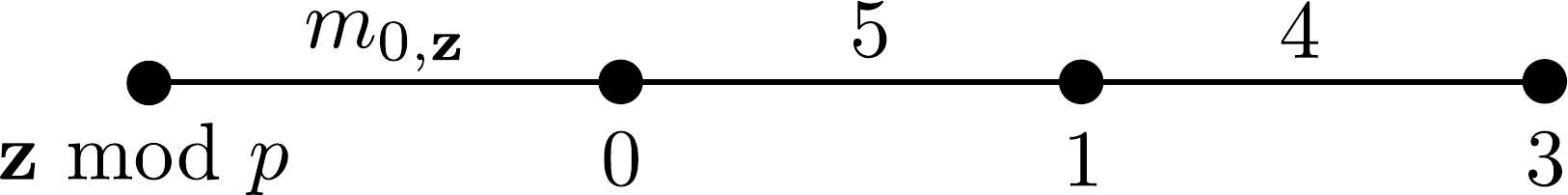}
                \vspace{2mm}
                \caption{$k = 4$, $\mathbf{z} = \mathbf{r}_3^{\mathbf{r}_2\mathbf{r}_1}$, $m_{0, \mathbf{z}} > 2$}
                \vspace{4mm}
                \label{fig:l54diagram}
            \end{subfigure}
            \begin{subfigure}{0.45\textwidth}
                \centering
                \includegraphics[scale = 0.40, keepaspectratio = true]{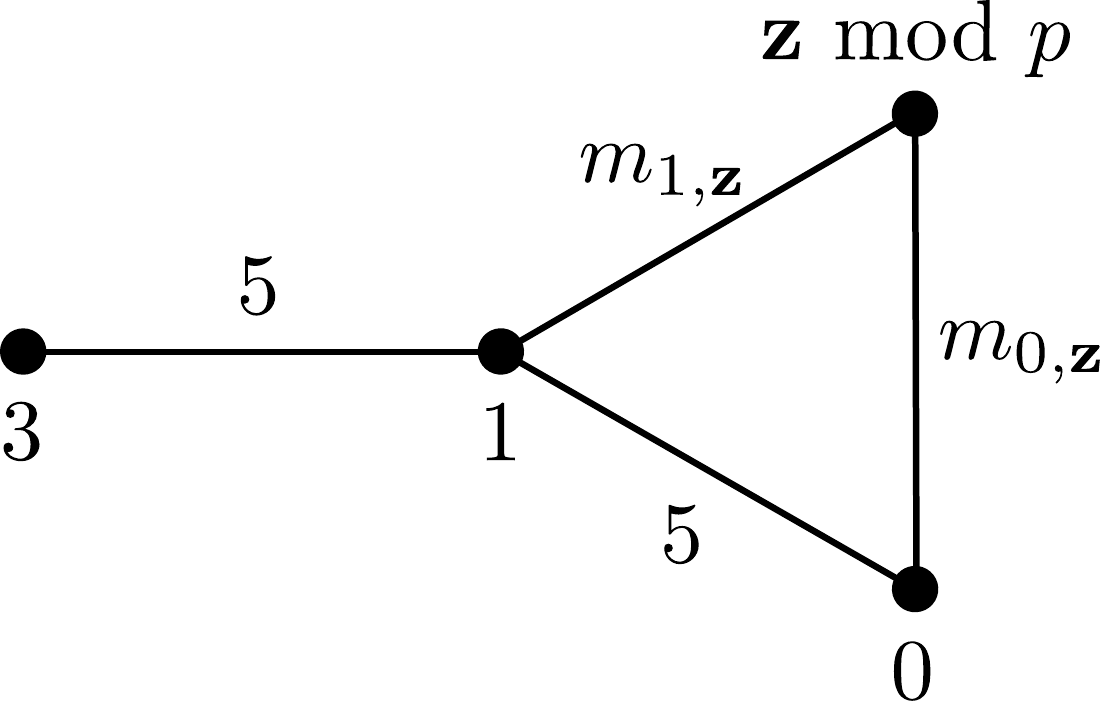}
                \vspace{2mm}
                \caption{$k = 5$, $\mathbf{z} = \mathbf{r}_0^{(\mathbf{r}_3\mathbf{r}_1\mathbf{r}_2)^5}$, $m_{1, \mathbf{z}} > 2$}
                \label{fig:lm55diagram}
            \end{subfigure}
            \begin{subfigure}{0.45\textwidth}
                \centering
                \includegraphics[scale = 0.40, keepaspectratio = true]{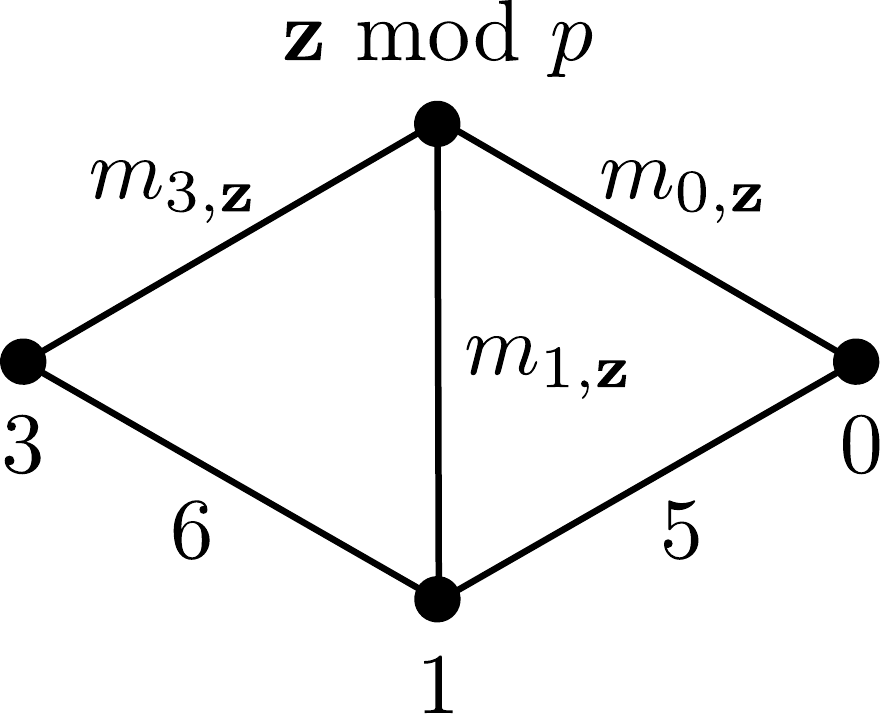}
                \vspace{2mm}
                \caption{$k = 6$, $\mathbf{z} = \mathbf{r}_3^{\mathbf{x}^i\mathbf{y}}$, $m_{1, \mathbf{z}} > 2$}
                \label{fig:lmt56diagram}
            \end{subfigure}   
            \caption{Coxeter diagram of $G^p_{\mathbf{r}_2 \leftarrow \mathbf{z}} = \langle{\mathbf{r}_0, \mathbf{r}_1, \mathbf{z}, \mathbf{r}_3 }\rangle^p$. The root of $\mathbf{z} \!\! \mod{p}$ is $2c_1 + 2c_2 + c_3$, $c_0 + (4 + 4\tau)c_1 + (2 + 2\tau)c_2 + (4 + 6\tau)c_3$, $6c_1 + 3c_2 + 4c_3$ according as $k = 4, 5, 6$, respectively.}
            \label{fig:lm5kdiagrams}
        \end{figure}

        In \textbf{Figure \ref{fig:lm5kdiagrams}}, we illustrate the diagram of the group $G^p_{\mathbf{r}_2 \leftarrow \mathbf{z}} := \langle{\mathbf{r}_0, \mathbf{r}_1, \mathbf{z}, \mathbf{r}_3}\rangle^p$ for the indicated value of $k$. Since the diagram is connected, we apply an analog of \textbf{Theorem \ref{thm:groupTypeGp}} to $G^p_{\mathbf{r}_2 \leftarrow \mathbf{z}}$, and conclude that $G^p_{\mathbf{r}_2 \leftarrow \mathbf{z}}$ is an orthogonal group of type either $O(4, q, \varepsilon)$ or $O_1(4, q, \varepsilon)$. A series of computations similar to what we did in \textbf{Example \ref{ex:536GroupType}} shows that $G^p$ and $G^p_{\mathbf{r}_2 \leftarrow \mathbf{z}}$ have the exact same orthogonal group type. Consequently, $G^p = G^p_{\mathbf{r}_2 \leftarrow \mathbf{z}} = \langle{\mathbf{r}_0, \mathbf{r}_1, \mathbf{z}, \mathbf{r}_3}\rangle^p$.
    \end{proof}

    The following theorem states the main result of this section.

    \begin{theorem}
        For any odd prime $p$ in $\mathbf{Z}[\tau]$, the group $G^p$, generated by the reductions modulo $p$ of the matrices $\mathbf{r}_0, \mathbf{r}_1, \mathbf{r}_2, \mathbf{r}_3$ in \eqref{eq:refMats}, is a C-group. 
        \label{thm:GpTailTriangleCGroupOdd}
    \end{theorem}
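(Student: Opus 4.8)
The plan is to apply \textbf{Lemma \ref{lem:intConds4}}, which reduces the C-group property of $G^p$ to two tasks: first, showing that the three distinguished subgroups $G_0^p$, $G_2^p$, and $G_3^p$ are themselves C-groups; and second, verifying the three intersection conditions $G_0^p \cap G_2^p = G_{0, 2}^p$, $G_0^p \cap G_3^p = G_{0, 3}^p$, and $G_2^p \cap G_3^p = G_{2, 3}^p$. The classification already obtained in \textbf{Theorem \ref{thm:groupTypeGp023}} and \textbf{Table \ref{tbl:GpG023pClassificationOdd}} does most of the work for the first task, while the orthogonal geometry of the ambient space $V(4, q)$ drives the second.

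For the first task, \textbf{Theorem \ref{thm:groupTypeGp023}} identifies $G_0^p$ and $G_3^p$ as reductions of the standard representations of $\mathcal{A}_3$, $\mathcal{B}_3$, or $\mathcal{H}_3$, or as the toroidal group $[3, 6]_{(s, 0)}$, each of which is a string C-group by \cite{Hartley2006, McMullenSchulte2002}. The remaining subgroup $G_2^p = \langle{\mathbf{r}_0, \mathbf{r}_1, \mathbf{r}_3}\rangle^p$ is a rank-3 string group of type $[5, k]$ classified as $O(3, q, 0)$ or $O_1(3, q, 0)$; here I would invoke the rank-3 form of \eqref{eq:intConds3} (\textbf{Proposition 2E16} of \cite{McMullenSchulte2002}), which collapses to the single requirement $\langle{\mathbf{r}_0, \mathbf{r}_1}\rangle \cap \langle{\mathbf{r}_1, \mathbf{r}_3}\rangle = \langle{\mathbf{r}_1}\rangle$. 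Since $G^p$ is a smooth quotient, these two factors are dihedral of orders $10$ and $2k$, with reflections whose roots lie in $\langle{v_0, v_1}\rangle$ and $\langle{v_1, v_3}\rangle$, respectively; a common nontrivial element must then fix both $\langle{v_0, v_1}\rangle^{\perp}$ and $\langle{v_1, v_3}\rangle^{\perp}$ inside $\langle{v_0, v_1, v_3}\rangle$, forcing it into $\langle{\mathbf{r}_1}\rangle$. The two degenerate situations flagged earlier, namely $k = 5$ with $p$ in Class I and $k = 6$ with $p$ an associate of $3$, yield finite groups that I would confirm to be C-groups by direct computation in GAP.

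For the second task, I would exploit the fact that each $G_i^p$ fixes the line $U_i^{\perp}$ pointwise, where $U_i = \langle{v_j \mid j \neq i}\rangle$. Consequently, any element of $G_i^p \cap G_j^p$ fixes $U_i^{\perp} + U_j^{\perp} = (U_i \cap U_j)^{\perp} = U_{i, j}^{\perp}$ pointwise and preserves the plane $U_{i, j} = \langle{v_k \mid k \neq i, j}\rangle$, on which it acts as an element of $O(U_{i, j})$. Restricting attention to whichever of $G_i^p$, $G_j^p$ is one of the small groups $\mathcal{A}_3^p$, $\mathcal{B}_3^p$, $\mathcal{H}_3^p$, or $[3, 6]_{(s, 0)}$, I would then argue that the pointwise stabilizer of the anisotropic line $U_{i, j}^{\perp} \cap U_i$ inside that rank-3 group is precisely the dihedral parabolic $G_{i, j}^p$, interpreted geometrically as the stabilizer of the corresponding vertex or edge axis of the associated polyhedron. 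Since the generators of $G_{i, j}^p$ lie in both $G_i^p$ and $G_j^p$, this yields the required equality. For $k = 4, 5, 6$ the asymmetry of the triangle makes the bookkeeping for the intersections touching the large orthogonal subgroup $G_2^p$ delicate, and here I would call on \textbf{Lemma \ref{lem:53kPolys}}: replacing $\mathbf{r}_2$ by the reflection $\mathbf{z}$, which lies in a distinguished subgroup and produces the connected diagram of \textbf{Figure \ref{fig:lm5kdiagrams}}, presents $G^p$ on generators with a more transparent subgroup lattice, letting me match the orders recorded in \textbf{Tables \ref{tbl:relevantGroups} and \ref{tbl:GpG023pClassificationOdd}} against the relation $|G_i^p \cap G_j^p| = |G_i^p|\,|G_j^p| / |G_i^p G_j^p|$.

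The main obstacle I anticipate is exactly this second task: confirming that no element beyond $G_{i, j}^p$ survives in $G_i^p \cap G_j^p$. The fix-space reduction cleanly traps the intersection inside a pointwise line-stabilizer, but identifying that stabilizer with the dihedral parabolic rests on the modular reduction retaining enough of the polyhedral geometry, which can degenerate precisely at the singular and small-characteristic primes. Handling those primes uniformly, rather than one at a time in GAP, together with verifying that the spinor-norm and order constraints force the products $G_i^p G_j^p$ to the predicted size, is where the real effort will lie.
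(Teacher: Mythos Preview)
Your skeleton matches the paper's: both invoke \textbf{Lemma \ref{lem:intConds4}}, both use \textbf{Theorem \ref{thm:groupTypeGp023}} to dispatch $G_0^p$ and $G_3^p$, and both verify the rank-$3$ intersection for $G_2^p$ by recognizing $G_{0,2}^p$ and $G_{2,3}^p$ as dihedral groups of orders $2k$ and $10$ (the paper simply notes that their intersection, being a common subgroup of two distinct dihedral groups with these orders and containing $\langle\mathbf r_1\rangle^p$, is forced to have order~$2$; your geometric version reaches the same conclusion). Where you diverge substantially is in the three rank-$4$ intersections. The paper does \emph{not} use fix-space geometry: instead, for each $k$ it assumes $G_0^p\cap G_2^p\supsetneq G_{0,2}^p$, lists the (very few) subgroups of the small group $G_0^p\in\{\mathcal A_3^p,\mathcal B_3^p,\mathcal H_3^p,[3,6]_{(s,0)}\}$ that properly contain the dihedral parabolic $G_{0,2}^p$ and avoid $\mathbf r_2$, observes that each such candidate contains the specific element $\mathbf z$ (or a conjugating word producing it) from \textbf{Lemma \ref{lem:53kPolys}}, and then applies that lemma to conclude $G_2^p\supseteq\langle\mathbf r_0,\mathbf r_1,\mathbf z,\mathbf r_3\rangle^p=G^p$, contradicting \textbf{Table \ref{tbl:GpG023pClassificationOdd}}. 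The other two intersections are handled the same way.

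Your fix-space route is conceptually appealing but the step you flag as ``the real effort'' is genuinely the whole difficulty, and it is not clear it closes. Over $\mathbf F_q$ the pointwise stabilizer in $G_0^p$ of the line $U_{0,2}^\perp\cap U_0$ need not coincide with the dihedral parabolic $G_{0,2}^p$: abstractly $\mathcal H_3$, for instance, has subgroups strictly between $D_5$ and the full group, and nothing in the orthogonal geometry alone rules out such a subgroup acting trivially on that particular line modulo $p$. Establishing that no extra elements creep in ultimately forces you back to a case analysis of the subgroup lattice of $G_0^p$, which is exactly what the paper does directly. Also, your intended use of \textbf{Lemma \ref{lem:53kPolys}} via the order formula $|G_i^p\cap G_j^p|=|G_i^p|\,|G_j^p|/|G_i^pG_j^p|$ is not how the paper deploys it: the lemma is used not to compute orders but to show that any overgroup of $G_{0,2}^p$ inside $G_0^p$ already forces $G_2^p$ to contain a four-reflection generating set for all of $G^p$. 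That contradiction mechanism is the actual engine of the proof, and it sidesteps the stabilizer identification entirely.
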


    \begin{proof}
        To prove the theorem, we make use of \textbf{Lemma \ref{lem:intConds4}} and show that $G_0^p$, $G_2^p$, and $G_3^p$ are string C-groups which satisfy the intersections $G_0^p \cap G_2^p = G_{0, 2}^p$, $G_0^p \cap G_3^p = G_{0, 3}^p$, and $G_2^p \cap G_3^p = G_{2, 3}^p$.

        By \textbf{Theorem \ref{thm:groupTypeGp023}} and the remark that follows, we conclude that $G_0^p$ and $G_3^p$ are string C-groups, regardless of $p$. We shall show that the same is true  for $G_2^p$. This is equivalent to showing that $G_{0, 2}^p \cap G_{2, 3}^p = \langle{\mathbf{r}_1}\rangle^p$ by \eqref{eq:intConds3}. Clearly, the order 2 subgroup $\langle{\mathbf{r}_1}\rangle^p$ is contained in the intersection $G_{0, 2}^p \cap G_{2, 3}^p$. Since $G_2^p$ is a smooth quotient of the Coxeter group $[5, k]$, where $k = 3, 4, 5, \text{ or } 6$, then $G_{0, 2}^p$ and $G_{2, 3}^p$ must be distinct dihedral groups of order $2k$ and $10$, respectively. By order consideration, their intersection must have order $2$. Hence, $G_{0, 2}^p \cap G_{2, 3}^p$ must be contained in $\langle{\mathbf{r}_1}\rangle^p$ as well.

        It then remains to show that the required intersections are satisfied. We shall discuss the proof that $G_0^p \cap G_2^p = G_{0, 2}^p$ completely. The proofs of the remaining two follow similar arguments and will be omitted for economy. The main strategy is to assume that $I_{0, 2}^p := G_0^p \cap G_2^p$ is a subgroup of $G_0^p$ that properly contains $G_{0, 2}^p$, then use the subgroup structure of $G_0^p$ to derive a contradiction. For $k = 4, 5, 6$, this erroneous assumption coupled with \textbf{Lemma \ref{lem:53kPolys}} will lead to the conclusion that $G^p = G_2^p$, thereby contradicting \textbf{Table \ref{tbl:GpG023pClassificationOdd}} which implies that $G_2^p$ must be a proper subgroup of $G^p$, and hence cannot contain $\mathbf{r}_2 \!\! \mod{p}$. This series of contradictions forces the conclusion that $I_{0, 2}^p = G_{0, 2}^p$. The details and contradiction derived for each value of $k$ are discussed below:
        \begin{enumerate}
            \sloppy
            \item[$k = 3$.] We have $G_0^p \simeq \mathcal{A}_3^p$ and $G_{0, 2}^p \simeq D_3$. Since $D_3$ is a maximal subgroup of $\mathcal{A}_3^p$, then $I_{0, 2}^p = G_0^p$. This clearly contradicts the fact that $\mathbf{r}_2 \!\! \mod{p}$, which is an element of $G_0^p$, is not in $I_{0, 2}^p$.
            
            \item[$k = 4$.] We have $G_0^p \simeq \mathcal{B}_3^p$ and $G_{0, 2}^p \simeq D_4$. The only subgroup of $G_0^p$ that properly contains $G_{0, 2}^p$ and does not contain $\mathbf{r}_2 \!\! \mod{p}$ is $\langle{\mathbf{r}_1, \mathbf{r}_3^{\mathbf{r}_2\mathbf{r}_1}, \mathbf{r}_3}\rangle^p$. If $I_{0, 2}^p = \langle{\mathbf{r}_1, \mathbf{r}_3^{\mathbf{r}_2\mathbf{r}_1}, \mathbf{r}_3}\rangle^p$, then $\mathbf{r}_3^{\mathbf{r}_2\mathbf{r}_1} \!\! \mod{p}$ must be in $G_2^p$, implying that $G_2^p = \langle{\mathbf{r}_0, \mathbf{r}_1, \mathbf{r}_3^{\mathbf{r}_2\mathbf{r}_1}, \mathbf{r}_3}\rangle^p$. An application of \textbf{Lemma \ref{lem:53kPolys}} yields the contradiction that $G^p = G_2^p$. 
            
            \item[$k = 5$.] The case where $p$ is a prime in Class I, and hence an associate of $\sqrt{5}$, can be easily shown to satisfy the required intersection condition (with the aid of GAP, for example). It remains to show that the same intersection condition is satisfied whenever $p$ is in Class II or III. We have $G_0^p \simeq \mathcal{H}_3^p$ and $G_{0, 2}^p \simeq D_5$. The only subgroup of $G_0^p$ that satisfies the requirements is $\langle{\mathbf{r}_1, (\mathbf{r}_3\mathbf{r}_1\mathbf{r}_2)^5, \mathbf{r}_3}\rangle^p$. If $I_{0, 2}^p = \langle{\mathbf{r}_1, (\mathbf{r}_3\mathbf{r}_1\mathbf{r}_2)^5, \mathbf{r}_3}\rangle^p$, then $(\mathbf{r}_3\mathbf{r}_1\mathbf{r}_2)^5 \!\! \mod{p}$ must be in $G_2^p$. It follows that $G_2^p$ contains the subgroup $\langle{\mathbf{r}_0, \mathbf{r}_1, \mathbf{r}_0^{(\mathbf{r}_3\mathbf{r}_1\mathbf{r}_2)^5}, \mathbf{r}_3 }\rangle^p$. A second application of \textbf{Lemma \ref{lem:53kPolys}} yields the same contradiction as in the previous case.

            \item[$k = 6$.] The case where $p$ is an associate of $3$ can be easily shown to satisfy the required intersection condition as well. We now assume that $p$ is not an associate of $3$. In this case, we have $G_0^p \simeq (C_s \times C_s) \rtimes D_6$ and $G_{0, 2}^p \simeq D_6$. Since $s$ is a rational prime, a subgroup $G_0^p$ that properly contains $G_{0, 2}^p$ and that does not contain $\mathbf{r}_2 \!\! \mod{p}$ must be of the form $\langle{\mathbf{w}, \mathbf{r}_1, \mathbf{r}_3}\rangle^p$, where $\mathbf{w}$ is either $\mathbf{x}$ or $\mathbf{x}^i\mathbf{y}$ for some $0 \leq i \leq s - 1$. If $I_{0, 2}^p = \langle{\mathbf{x}, \mathbf{r}_1, \mathbf{r}_3}\rangle^p$, then $\mathbf{r}_2 = \mathbf{r}_1\mathbf{r}_3\mathbf{r}_1\mathbf{r}_3\mathbf{r}_1\mathbf{x}$, implying that $\mathbf{r}_2 \!\! \mod{p}$ is in $I_{0, 2}^p$. If $I_{0, 2}^p = \langle{\mathbf{x}^i\mathbf{y}, \mathbf{r}_1, \mathbf{r}_3}\rangle^p$, on the other hand, then $G_2^p$ contains the element $\mathbf{r}_3^{\mathbf{x}^i\mathbf{y}} \!\! \mod{p}$, and hence contains the subgroup $\langle{\mathbf{r}_0, \mathbf{r}_1, \mathbf{r}_3^{\mathbf{x}^i\mathbf{y}}, \mathbf{r}_3}\rangle^p$. A final application of \textbf{Lemma \ref{lem:53kPolys}} yields the exact same contradiction as in the previous two cases.
        \end{enumerate}
    \end{proof}

    The strategy used in the proof above relied on the property that, for each value of $k$, at least two of the distinguished subgroups $G_0^p$, $G_2^p$, $G_3^p$ have relatively simple structure and possess subgroups that can be easily enumerated either manually or with the aid of GAP. In fact, when $p$ is an associate of the prime $\sqrt{5}$ or $3$, then the same proof strategy can be employed to show that the reduction modulo $p$ of the star Coxeter group $[5, 3; \infty]$ is a C-group. In this case, we may use the scaling factor $c_3 = 4\tau$, which corresponds to $\rho_{\infty} = 4$, to obtain the rescaled basis vector $v_3 = 4\tau a_3$. With this scaling, the order of $\mathbf{r}_1\mathbf{r}_3 \!\! \mod{p}$ becomes $5$ or $3$, and we obtain a star C-group of type $\{5, 3; 5\}$ or $\{5, 3; 3\}$, respectively.

\section{alternating semiregular 4-polytopes from $G^p$}
    \label{sec:semiregular}

    By applying the method of Wythoff construction developed in \cite{MonsonSchulte2012} to any star C-group $G^p$ in \textbf{Tables \ref{tbl:GpG023pClassificationEven}} and \textbf{\ref{tbl:GpG023pClassificationOdd}}, one may construct an alternating semiregular 4-polytope $\mathcal{S}$. Such a polytope possesses a 5-fold rotational symmetry and has possibly two distinct types of regular polyhedral cells $\mathcal{P}$ and $\mathcal{Q}$, with two of each alternating around an edge. The types of $\mathcal{P}$ and $\mathcal{Q}$ depend on which node in the diagram of $G$ is ringed. We briefly describe two examples below:
    \begin{example}
        Let us take the Coxeter group $G \simeq [5, 3; 3]$ and discuss the resulting alternating semiregular 4-polytope $\mathcal{S}$ for each of two specific prime moduli. 
        \begin{enumerate}
            \item For the even prime $p = 2$, we recall from \S{\ref{subsec:evenPrime}} that the reduced group $G^2$ is isomorphic to the group $C_2^4 \rtimes A_5$ of order 960. If we ring the node labeled 2 in the Coxeter diagram for $G$ (see \textbf{Figure \ref{fig:533diagramEven}}), then $\mathcal{S}$ will have 16 vertices (the right cosets of $G_2^2$), 120 edges (the right cosets of $G_1^2$), 160 triangular subfacets (the right cosets of $G_{0, 3}^2$), 16 hemi-icosahedral cells $\mathcal{P} = \{3, 5\}_5$ (the right cosets of $G_3^2$), and 40 tetrahedral cells $\mathcal{Q} = \{3, 3\}$ (the right cosets of $G_0^2$). 
            
            \begin{figure}[H]
                \begin{subfigure}{0.45\textwidth}
                    \centering
                    \includegraphics[scale = 0.40, keepaspectratio = true]{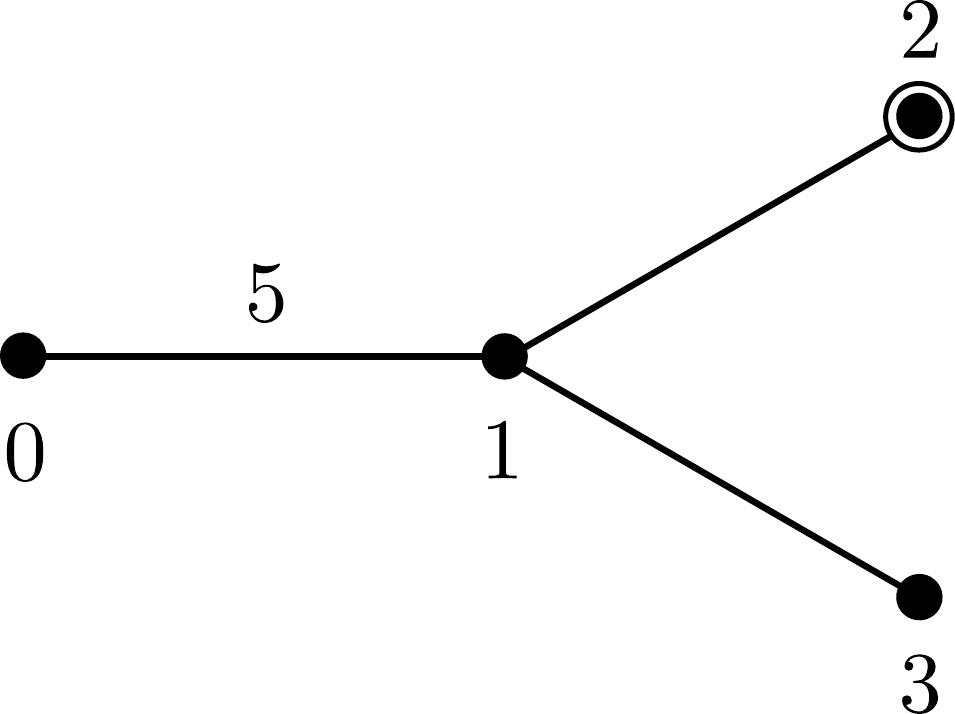}
                    \vspace{2mm}
                    \caption{}
                    \label{fig:533diagramEven}
                \end{subfigure}
                \begin{subfigure}{0.45\textwidth}
                    \centering
                    \includegraphics[scale = 0.40, keepaspectratio = true]{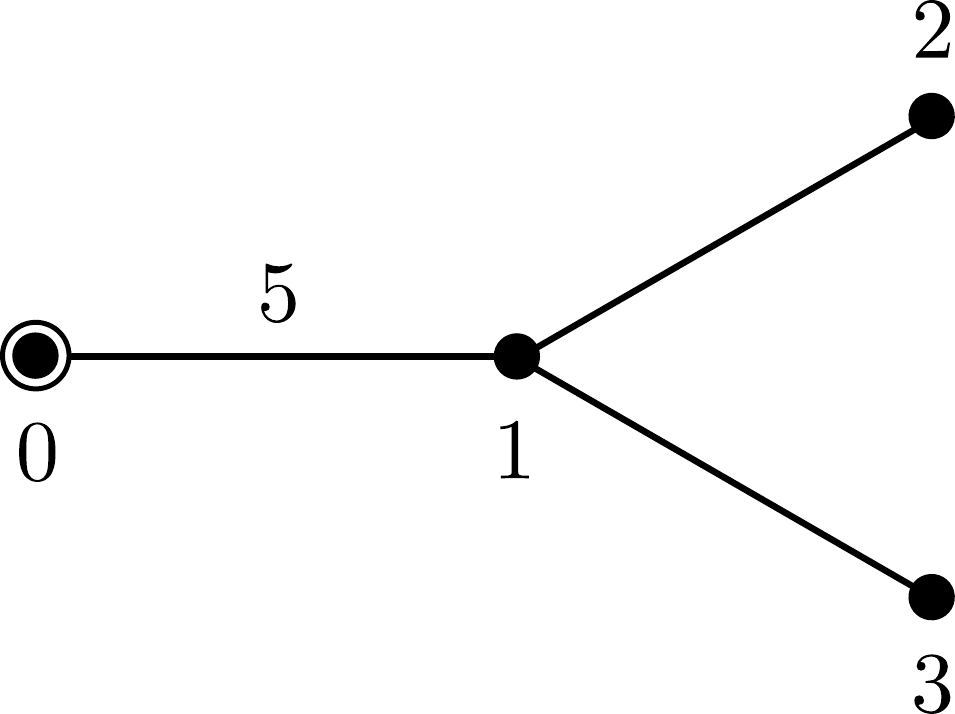}
                    \vspace{2mm}
                    \caption{}
                    \label{fig:533diagramOdd}
                \end{subfigure}   
                \caption{Coxeter diagram of the group $G \simeq [5, 3; 3]$ with a ringed node.}
                \label{fig:533diagram}
            \end{figure}

            Each vertex of $\mathcal{S}$ is surrounded by six hemi-icosahedra and ten tetrahedra. Moreover, each edge of $\mathcal{S}$ is incident to a pair of hemi-icosahedral cells and a pair of tetrahedral cells which are arranged alternately around it. Finally, since $\mathcal{S}$ has two types of cells, \textbf{Proposition 4.8(b)} of \cite{MonsonSchulte2012} implies that $\mathcal{S}$ is a 2-orbit non-regular 4-polytope whose automorphism group is $G^2$.  
            
            \item For the odd prime $p = \sqrt{5}$, we see in \textbf{Table \ref{tbl:GpG023pClassificationOdd}} that the reduced group $G^{\sqrt{5}}$ is the orthogonal group $O_1(4, 5, -1)$ of order 15,600. If we ring the node labeled 0 this time (see \textbf{Figure \ref{fig:533diagramOdd}}), then $\mathcal{S}$ will have 650 vertices (the right cosets of $G_0^{\sqrt{5}}$), 1,950 edges (the right cosets of $G_1^{\sqrt{5}}$), 1,560 pentagonal subfacets (the right cosets of $G_{2, 3}^{\sqrt{5}}$), and 260 dodecahedral cells $\mathcal{P} = \mathcal{Q} = \{5, 3\}$ (the right cosets of $G_3^{\sqrt{5}}$ and $G_2^{\sqrt{5}}$). Four such cells surround an edge of $\mathcal{S}$. 

            Observe that, unlike in the previous case, $\mathcal{S}$ has only one type of cell this time. Indeed, since the map which swaps $\mathbf{r}_2 \!\! \mod{p}$ and $\mathbf{r}_3 \!\! \mod{p}$, and fixes $\mathbf{r}_0 \!\! \mod{p}$ and $\mathbf{r}_1 \!\! \mod{p}$ induces an automorphism of $G^{\sqrt{5}}$, \textbf{Proposition 4.8(a)} of \cite{MonsonSchulte2012} implies that $\mathcal{S}$ is a regular 4-polytope of type $\{5, 3, 4\}$ whose vertex-figure is the octahedron $\{3, 4\}$. Furthermore, its automorphism group is isomorphic to the much larger group $G^{\sqrt{5}} \rtimes C_2$.
        \end{enumerate}
        \label{ex:533SemiregularPolytope}
    \end{example}

%\nocite{*}
\bibliographystyle{amsplain}
%\bibliography{}

\end{document}